\documentclass[10pt,a4paper,oneside]{article}
\usepackage{lmodern}
\usepackage[T1]{fontenc}
\usepackage[indentafter]{titlesec}
\titleformat{name=\section}{}{\thetitle.}{0.8em}{\centering\scshape}
\titleformat{name=\subsection}{}{\thetitle.}{0.8em}{\centering\itshape}
\titleformat{name=\subsubsection}[runin]{}{\thetitle.}{0.5em}{\itshape}[.]
\titleformat{name=\paragraph,numberless}[runin]{}{}{0em}{}[.]
\titlespacing{\paragraph}{0em}{0em}{0.5em}
\titleformat{name=\subparagraph,numberless}[runin]{}{}{0em}{}[.]
\titlespacing{\subparagraph}{0em}{0em}{0.5em}
\usepackage{blindtext}

\usepackage{amsthm}

\usepackage{amsfonts}
\usepackage{amssymb}
\usepackage{amssymb,amsmath}
\usepackage{indentfirst}
\usepackage{bussproofs}
\usepackage{graphicx}
\usepackage{pstricks,pst-node,pst-tree, pstricks-add}
\usepackage{color}
\usepackage{fancybox}
\usepackage{bussproofs}
\usepackage{latexsym}
\usepackage{pstricks,pst-node,pst-tree, pstricks-add}
\usepackage{mathtools}
\usepackage{slashed}
\usepackage{picinpar}
\usepackage{float}
\usepackage{mathdots}
\usepackage{tikz-cd}
\usepackage{xypic}
\usepackage[all,2cell,cmtip]{xy} 
\UseAllTwocells 
\usepackage{relsize}
\usepackage{stmaryrd}
\usepackage{longtable}

\definecolor{mlblue}{RGB}{34,139,163}
\definecolor{lblue}{RGB}{63,194,219}

\usepackage{hyperref}
\hypersetup{
    bookmarks=false,         
    unicode=false,          
    pdftoolbar=false,        
    pdfmenubar=false,        
    pdffitwindow=false,     
    pdfstartview={FitH},    
    pdfauthor={Cosimo},     
    colorlinks=true,       
    linkcolor=mlblue,          
    citecolor=lblue,        
	urlcolor=mlblue
}

\newtheorem{thm}{Theorem}[subsection]
\newtheorem{cor}[thm]{Corollary}
\newtheorem{lem}[thm]{Lemma}
\newtheorem{prop}[thm]{Proposition}
\theoremstyle{definition}
\newtheorem{defn}[thm]{Definition}
\theoremstyle{remark}
\newtheorem{oss}{Remark}

\author{Cosimo Perini Brogi}\date{\small{Department of Mathematics $\cdot$ University of Genova} \\ \small\texttt{perinibrogi@dima.unige.it}}
\title{\Large\textsc{Curry-Howard-Lambek correspondence for intuitionistic belief}
\thanks{The present version is a revised one; the original manuscript was submitted for publication to \emph{Studia Logica} in January 2020.}
}

\begin{document}
\maketitle
\tableofcontents
\begin{abstract} This paper introduces a \emph{natural deduction calculus for intuitionistic logic of belief} $\mathsf{IEL}^{-}$ which is easily turned into a \emph{modal $\lambda$-calculus} giving a computational semantics for deductions in $\mathsf{IEL}^{-}$. By using that interpretation, it is also proved that $\mathsf{IEL}^{-}$ has \emph{good proof-theoretic properties}. The correspondence between deductions and typed terms is then extended to a \emph{categorical semantics} for identity of proofs in $\mathsf{IEL}^{-}$ showing the general structure of such a modality for belief in an intuitionistic framework.  
\end{abstract}
\begin{small}
\begin{itemize}
\item[\textbf{Keywords:}] Intuitionistic modal logic, epistemic logic, categorial proof theory, modal type theory, proofs-as-programs.
\end{itemize}
\end{small}
\normalsize

\section*{Introduction}\addcontentsline{toc}{section}{Introduction}

Brouwer-Heyting-Kolmogorov (BHK) interpretation is based on a semantic reading of propositional variables as problems (or tasks), and of logical connectives as operations on \emph{proofs}. In this way, it provides a semantics of mathematical statements in which the computational aspects of proving and refuting are highlighted.\footnote{The reader is referred to \cite{bhk} for an introduction.}

In spite of being named after L.E.J.~Brouwer, this approach is rather away from the deeply philosophical attitude at the origin of intuitionism: In BHK interpretation, reasoning intuitionistically is similar to a safe mode of program execution which always terminates; on the contrary, according to the founders of intuitionism, at the basis of the mathematical activity there is a continuous mental process of construction of objects starting with the flow of time underlying the chain of natural numbers, and intuitionistic reasoning is what structures that process.\footnote{For instance, Dummett's \cite{eloi} advocates a purely philosophical justification of the whole current of intuitionistic mathematics.}

This reading of the mathematical activity is formally captured by Kripke semantics for intuitionistic logic \cite{kripke}: Relational structures based on pre-orders capture the informal idea of a process of growth of knowledge in time which characterises the mental life of the mathematician.

It is worth-noting that the focuses of these semantics are quite different: BHK interpretation stresses the importance of the concept of proof in the semantics for intuitionistic logic; Kripke's approach highlights the epistemic process behind the provability of a statement.

In \cite{artemov}, Artemov and Protopopescu make use of the BHK interpretation to extend -- in a sense -- the epistemic realm of constructivism: In BHK interpretation we have an implicit notion of proof whose epistemic aspects are modelled by Kripke structures; the construction of a proof for a specific proposition that we carried out as a cumulative mental process gives us sufficient reason for (at least) believing that proposition. It is then possible to cover also traditional epistemic states of belief and knowledge within such a framework, once we recognise the correct clauses for corresponding modal operators: Proving $A$ assures that $A$ is true; proving $\Box A$ is weaker, for we may believe in $A$ even though we do not have a direct proof of $A$ itself.

In \cite{artemov} the starting point is thus a BHK interpretation of epistemic statements in which knowledge and belief are considered as (different) results of a process of verification. The general idea is that a proof of a (mathematical) statement is a most strict type of verification, and that verifying a statement is a sufficient condition for \emph{believing} it. At the same time, \emph{knowing} that a statement is true means, according to this intuitionistic reading, that this very statement cannot be false, since we have a verification of it.

Hence, the proposed intuitionistic account of epistemic states validates a principle of ``constructivity of truth'' $$A\rightarrow \Box A$$ and of ``intuitionistic factivity of knowledge'' $$\Box A\rightarrow\neg\neg A.$$

The latter principle appears particularly promising for applying this formal framework in a broader realm than mathematics: In \cite{artemov} various informal justifications to the proposed account are indeed given, and those authors express the aim of giving new foundations to intuitionistic epistemology.

It appears clear, in any case, that both principles are validated by contemporary mathematical practice: When a statement $A$ is proved, then its very proof can be checked by a human being both by `pencil and paper' and by computer proof-assistants, so that the whole mathematical community can believe/know that what has been given is a proof of $A$ indeed. Also, intuitionistic factivity of knowledge states that whenever we know that $A$ is true in a (constructive, but maybe) not specified sense -- e.g.~we do not have direct access to its proof -- then $A$ cannot be false, if we agree that knowledge is at least consistent belief. Again, this is common in general mathematical practice, when a theorem is known to hold even by those who cannot develop a proof of it by themselves.

In short, it seems that the BHK computational reading of the epistemic operator $\Box$ provides a finer analysis of truth and of epistemic states from the constructive point of view.

In spite of this, the mentioned paper \cite{artemov} covers only axiomatic calculi and Kripke semantics for intuitionistic epistemic logics, so that the stimulating question of considering the computational aspects of epistemic states remained informal and at a very beginning stage.

The present paper, on the contrary, is committed to giving a precise, formal analysis of the computational content of intuitionistic belief.

In order to establish some clear facts, a \textbf{natural deduction} system $\mathsf{IEL}^{-}$ for the intuitionistic logic of belief is developed and designed with the intent of translating it into a functional calculus of $\mathsf{IEL}^{-}$-deductions. In a sense, we define a formal counterpart of Artemov and Protopopescu's reading of the epistemic operator for belief by extending the Curry-Howard correspondence between intuitionistic natural deduction $\mathsf{NJ}$ and simple type theory, to a \textbf{modal $\lambda$-calculus} in which the modal connective on propositions behaves according to a single (term-)introduction rule.

Furthermore, we establish \textbf{normalization} for $\mathsf{IEL}^{-}$ along with other \textbf{proof-theoretic properties}, and, in spite of its simple grammar, we show that there is a surprisingly rich \textbf{categorical structure} behind the calculus: Our $\lambda$-system for $\mathsf{IEL}^{-}$-deductions is sound and complete w.r.t. the class of bi-cartesian closed categories equipped with a monoidal pointed endofunctor whose point is monoidal.

Therefore, by adopting the proofs-as-programs paradigm to give a precise meaning to the motto ``belief-as-verification'' we succeed in:
\begin{itemize}
\item[$\circ$] Designing a natural deduction calculus $\mathsf{IEL}^{-}$ for intuitionistic belief which is well-behaved from a proof-theoretic point of view;
\item[$\circ$] Proving that this calculus corresponds to a modal typed system in which every term has a unique normal form, and the epistemic modality acquires a precise functional interpretation;
\item[$\circ$] Developing a categorical semantics for intuitionistic belief which focuses on identity of proofs, and not simply on provability.
\end{itemize}

The paper is then organised as follows: In Section \ref{1}, the axiomatic calculus $\mathbb{IEL}^{-}$ and its relational semantics are recalled. In Section \ref{2}, we introduce the natural deduction system $\mathsf{IEL}^{-}$ and prove -- syntactically -- that it is logically equivalent to $\mathbb{IEL}^{-}$. Then we investigate some of its proof-theoretic properties: We start by proving that detours can be eliminated from deductions by defining a \mbox{$\lambda$-calculus} with a modal operator which captures in a very natural way the behaviour of the epistemic modality on propositions; then we prove a canonicity lemma for this calculus of proof-terms, from which we derive \emph{syntactic} proofs of the weak and standard disjunction property. Finally, in Section \ref{3}, we give a categorical semantics for $\mathsf{IEL}^{-}$-deduction: After recalling the main lines of Curry-Howard-Lambek correspondence, we prove that deductions define -- up to normalization -- specific categorical structures which subsume Heyting algebras with operators and, at the same time, provide a proof-theoretic semantics for intuitionistic belief.

By means of these results we can also see that some claims in \cite{artemov} concerning a type-theoretic reading of the epistemic operator as the truncation of types are not correct. The belief modality there defined is `weaker' than $\mathsf{inh}:\mathtt{Type}\rightarrow\mathtt{Type}$ because of its type-theoretic -- hence syntactic -- behaviour, validated also from a categorical -- hence semantic -- point of view: Types truncation equips bi-cartesian closed categories with an idempotent monad, while we show that the belief operator we are considering is a more general functor.\footnote{In fact even when thought of as a modal connective of our base language, the epistemic $\Box$ is not an idempotent operator.}

\section{Axiomatic calculus for intuitionistic belief}\label{1}
Let's start by recalling the syntax and relational semantics for the logic of intuitionistic belief as introduced in \cite{artemov}.
\subsection{System $\mathbb{IEL}^{-}$}
\begin{defn} $\mathbb{IEL}^{-}$ is the axiomatic calculus given by:
\begin{itemize}
\item Axiom schemes for intuitionistic propositional logic;
\item Axiom scheme $\mathsf{K}: \Box(A\rightarrow B)\rightarrow\Box A\rightarrow\Box B$;
\item Axiom scheme of co-reflection $A\rightarrow\Box A$;
\item Modus Pones \AxiomC{$A\rightarrow B$}\AxiomC{$A$}\RightLabel{$\mathsmaller{\mathit{MP}}$}\BinaryInfC{$B$}\DisplayProof as the only inference rule.
\end{itemize}
We write $\Gamma\vdash_{\mathbb{IEL}^{-}} A$ when $A$ is derivable in $\mathbb{IEL}^{-}$ assuming the set of hypotheses $\Gamma$, and we write $\mathbb{IEL}^{-}\vdash A$ when $\Gamma=\varnothing$. 
\end{defn}

We immediately have
\begin{prop} The following properties hold:
\begin{itemize}
\item[(i)] Necessitation rule \AxiomC{$A$}\UnaryInfC{$\Box A$}\DisplayProof is derivable in $\mathbb{IEL}^{-}$;
\item[(ii)] The deduction theorem holds in $\mathbb{IEL}^{-}$;
\item[(iii)] $\mathbb{IEL}^{-}$ is a normal intuitionistic modal system.
\end{itemize}
\end{prop}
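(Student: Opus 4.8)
The plan is to prove the three items in order, since each uses the previous ones.

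\medskip

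\noindent\textbf{(i) Necessitation.} Suppose $\mathbb{IEL}^{-}\vdash A$. I would argue by induction on the length of the derivation of $A$ (which, crucially, uses no hypotheses). The key observation is that co-reflection $A\rightarrow\Box A$ together with $\mathit{MP}$ immediately yields $\Box A$ from $A$ --- so in fact necessitation follows in one step from co-reflection once we have a theorem $A$. Concretely: from $\mathbb{IEL}^{-}\vdash A$ and the axiom instance $\mathbb{IEL}^{-}\vdash A\rightarrow\Box A$, apply $\mathit{MP}$ to get $\mathbb{IEL}^{-}\vdash\Box A$. No induction is even needed; the rule ``from $\vdash A$ infer $\vdash\Box A$'' is admissible by this single derivation pattern. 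The only subtlety worth flagging is that necessitation here is a rule on \emph{theorems}, not a rule that may be applied under open assumptions (otherwise it would be unsound, since $A\vdash\Box A$ holds but $\Box$ would then collapse hypotheses incorrectly in the presence of the deduction theorem); but as stated, with the premise being a theorem, the one-line argument suffices.

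\medskip

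\noindent\textbf{(ii) Deduction theorem.} I would prove the standard statement: $\Gamma,A\vdash_{\mathbb{IEL}^{-}}B$ iff $\Gamma\vdash_{\mathbb{IEL}^{-}}A\rightarrow B$. The right-to-left direction is immediate by $\mathit{MP}$. For left-to-right I proceed by induction on the length of the derivation of $B$ from $\Gamma\cup\{A\}$, exactly as in the classical Hilbert-style proof. The base cases are: $B$ is an axiom or a member of $\Gamma$ (then $\Gamma\vdash B$, and $B\rightarrow(A\rightarrow B)$ is an intuitionistic axiom, so $\Gamma\vdash A\rightarrow B$), and $B=A$ (then $A\rightarrow A$ is derivable). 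The inductive step is the case where $B$ is obtained by $\mathit{MP}$ from $C\rightarrow B$ and $C$: by induction $\Gamma\vdash A\rightarrow(C\rightarrow B)$ and $\Gamma\vdash A\rightarrow C$, and then the intuitionistic axiom $(A\rightarrow(C\rightarrow B))\rightarrow((A\rightarrow C)\rightarrow(A\rightarrow B))$ plus two applications of $\mathit{MP}$ give $\Gamma\vdash A\rightarrow B$. Since $\mathit{MP}$ is the \emph{only} inference rule of $\mathbb{IEL}^{-}$, there is no modal rule to check --- this is precisely why the deduction theorem holds unrestrictedly, unlike in systems with a necessitation rule. This is the step requiring the most care, but it is entirely routine.

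\medskip

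\noindent\textbf{(iii) Normality.} By ``normal intuitionistic modal system'' I understand: the logic contains intuitionistic propositional logic, contains the axiom $\mathsf{K}$, and is closed under $\mathit{MP}$, substitution, and the necessitation rule. The first two are immediate from the definition of $\mathbb{IEL}^{-}$; closure under $\mathit{MP}$ and under uniform substitution is built into the Hilbert-style presentation (substitution instances of axioms are axioms); and closure under necessitation is exactly item (i). So (iii) is just the conjunction of what has already been established together with the definitional features of the axiomatic calculus; I would simply assemble these. The main obstacle in the whole proposition is nil --- the only point demanding attention is stating (i) with the premise as a theorem rather than under assumptions, and noting that this is compatible with (ii) precisely because $\mathbb{IEL}^{-}$ has no primitive modal inference rule.
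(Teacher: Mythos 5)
Your proof is correct; the paper itself gives no argument here but simply defers to \cite{artemov}, and what you write is exactly the standard reasoning found there: necessitation from co-reflection plus $\mathit{MP}$, the usual Hilbert-style induction for the deduction theorem (unproblematic precisely because $\mathit{MP}$ is the only primitive rule), and normality as the assembly of the previous items. One small quibble: your parenthetical in (i) warning that necessitation under open assumptions ``would be unsound'' is moot for this system --- since $A\rightarrow\Box A$ is an axiom, the generalized rule ``from $\Gamma\vdash A$ infer $\Gamma\vdash\Box A$'' is itself derivable and perfectly compatible with the deduction theorem; the usual caveat applies to systems like classical $\mathsf{K}$, not to $\mathbb{IEL}^{-}$.
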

\begin{proof}
See \cite{artemov}.
\end{proof}

As stated before, this system axiomatizes the idea of belief as the result of verification within a framework in which truth corresponds to provability, accordingly to the Brouwer-Heyting-Kolmogorov interpretation of intuitionistic logic.\footnote{See e.g. \cite{troeslavan}.} Note also that, in this perspective, the co-reflection scheme is valid, while its converse does not hold: If $A$ is true, then it has a proof, hence it is verified; but $A$ can be verified without disclosing a specific proof, therefore the standard epistemic scheme $\Box A\rightarrow A$ is not valid under this interpretation.\footnote{See Williamson's system in \cite{williamson} for an intuitionistic epistemic logic in which the standard epistemic principle is valid; however it is worth noting that this specific logic is not based on the BHK-semantics.}

\subsection{Kripke Semantics for $\mathbb{IEL}^{-}$}
Turning to relational semantics, in \cite{artemov} the following class of Kripke models is given.

\begin{defn} A model for $\mathbb{IEL}^{-}$ is a quadruple $\langle W,\leq,v,E\rangle$ where
\begin{itemize}
\item[$\circ$] $\langle W,\leq,v\rangle$ is a standard model for intuitionistic propositional logic;
\item[$\circ$] $E$ is a binary `knowledge' relation on $W$ such that:
\begin{itemize}
\item[$\cdot$] if $x\leq y$, then $xEy$; and
\item[$\cdot$] if $x\leq y$ and $yEz$, then $xEz$; graphically we have $$\xymatrix{ & y\ar@{.>}[r]^{E} & z\\
x\ar@{->}[ur]^{\leq}\ar@{.>}[urr]_{E} & & \\
}$$
\end{itemize}
\item[$\circ$] $v$ extends to a forcing relation $\vDash$ such that
\begin{itemize}
\item[$\cdot$] $x\vDash\Box A$ iff $y\vDash A$ for all $y$ such that $xEy$.
\end{itemize}
\end{itemize}
A formula $A$ is true in a model iff it is forced by each world of that model; we write $\mathbb{IEL}^{-}\vDash A$ iff $A$ is true in each model for $\mathbb{IEL}^{-}$.
\end{defn}

Note that this semantics assumes Kripke's original interpretation of intuitionistic reasoning as a growing knowledge -- or discovery process -- for an epistemic agent in which the relation $E$ defines an audit of `cognitively' $\leq$-accessible states in which the agent can commit a verification.

This semantics is adequate to the calculus:
\begin{thm} The following hold:
\begin{description}
\item[(Soundness)] If $\mathbb{IEL}^{-}\vdash A$, then $\mathbb{IEL}^{-}\vDash A$.
\item[(Completeness)] If $\mathbb{IEL}^{-}\vDash A$, then $\mathbb{IEL}^{-}\vdash A$.

\end{description}
\end{thm}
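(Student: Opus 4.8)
The plan is to prove soundness and completeness separately, following the standard modal-logic template adapted to the intuitionistic setting.

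For \textbf{soundness}, I would proceed by induction on the length of a derivation in $\mathbb{IEL}^{-}$. The base cases split into two groups: the intuitionistic propositional axioms are validated by the underlying intuitionistic Kripke frame $\langle W,\leq,v\rangle$ using the standard monotonicity of forcing, so nothing new is needed there; the modal axioms $\mathsf{K}$ and co-reflection require checking the forcing clause for $\Box$. For $\mathsf{K}$, suppose $x\vDash\Box(A\to B)$ and $x\vDash\Box A$; for any $y$ with $xEy$ we get $y\vDash A\to B$ and $y\vDash A$, hence $y\vDash B$, so $x\vDash\Box B$ — here one also needs monotonicity along $\leq$ inside the argument establishing persistence of $\Box$-formulas, which follows from the frame condition ``$x\leq y$ and $yEz$ imply $xEz$''. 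For co-reflection $A\to\Box A$, suppose $x\vDash A$ and $xEy$; I need $y\vDash A$. This is exactly where the reflexivity-like condition ``$x\leq y$ implies $xEy$'' is \emph{not} enough; rather I should first note that $A\to\Box A$ at $x$ means: for all $x'\geq x$ with $x'\vDash A$ and all $y$ with $x'Ey$, $y\vDash A$. The key observation is that in these models every $E$-successor is also reached in a way compatible with monotonicity — but more carefully, persistence of forcing of arbitrary formulas along $E$ must be proved simultaneously, or one restricts to the fact that atomic valuations and hence all formulas persist along $\leq$, and the frame conditions force $E\supseteq\leq$. I would handle this by proving, as an auxiliary lemma, monotonicity of $\vDash$ along both $\leq$ and $E$ by simultaneous induction on $A$, the $\Box$-case using transitivity of the mixed composition. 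The inductive step for $\mathit{MP}$ is immediate from the forcing clause for $\to$.

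For \textbf{completeness}, I would build the canonical model. Take $W$ to be the set of prime theories of $\mathbb{IEL}^{-}$ (consistent, deductively closed sets of formulas closed under the disjunction property), order them by inclusion $\subseteq$ for $\leq$, let $v$ be the induced atomic valuation, and define $\Gamma E\Delta$ iff $\{A : \Box A\in\Gamma\}\subseteq\Delta$. One then checks the frame conditions: $\Gamma\subseteq\Delta$ implies $\Gamma E\Delta$ because co-reflection gives $A\in\Gamma\Rightarrow\Box A\in\Gamma$, so $\{A:\Box A\in\Gamma\}\subseteq\Gamma\subseteq\Delta$ — wait, one needs $\{A:\Box A\in\Gamma\}\subseteq\Delta$; since $\Box A\in\Gamma$ and by co-reflection we do \emph{not} get $A\in\Gamma$ in general, so instead: $\Box A\in\Gamma$ and $\Gamma\subseteq\Delta$ give $\Box A\in\Delta$, which is not quite it either. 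The correct argument uses that $\{A:\Box A\in\Gamma\}$ is precisely the set we want inside $\Delta$; since $A\to\Box A$ is a theorem, if $A\in\Gamma$ then $\Box A\in\Gamma$, so $\Gamma\subseteq\{A:\Box A\in\Gamma\}$, hence $\Gamma E\Gamma$ and more generally $\Gamma\subseteq\Delta$ forces the inclusion after composing — I would spell this out carefully. The condition ``$\Gamma\subseteq\Delta$ and $\Delta E\Theta$ imply $\Gamma E\Theta$'' follows since $\{A:\Box A\in\Gamma\}\subseteq\{A:\Box A\in\Delta\}\subseteq\Theta$. The heart is the \textbf{truth lemma}: $\Gamma\vDash A$ iff $A\in\Gamma$, by induction on $A$, with the $\to$ and $\lor$ cases using the standard Lindenbaum-style extension of prime theories, and the $\Box$ case requiring: if $\Box A\notin\Gamma$, then $\{B:\Box B\in\Gamma\}\not\vdash A$, so this set extends to a prime theory $\Delta$ with $\Gamma E\Delta$ and $A\notin\Delta$. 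Finally, if $\mathbb{IEL}^{-}\nvdash A$, extend $\varnothing$ (or the empty theory) to a prime theory not containing $A$, giving a countermodel.

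The \textbf{main obstacle} is the $\Box$-case of the truth lemma in the completeness proof — specifically producing, from $\Box A\notin\Gamma$, a prime theory $\Delta$ with $\{B:\Box B\in\Gamma\}\subseteq\Delta$ and $A\notin\Delta$. This requires showing $\{B:\Box B\in\Gamma\}\nvdash A$, which uses the $\mathsf{K}$ axiom and necessitation (derivable by the earlier proposition) to argue that if $\{B_1,\dots,B_n\}\vdash A$ with each $\Box B_i\in\Gamma$, then $\Box A\in\Gamma$, a contradiction; then a Lindenbaum/pair-extension lemma for the intuitionistic consequence relation yields the prime $\Delta$. On the soundness side the only subtlety is getting the persistence lemma for $\Box$-formulas right, which is routine once the frame conditions are invoked in the correct order. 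Since the theorem is attributed to \cite{artemov}, I would present this as a sketch and refer there for the full verification of the Lindenbaum lemma and routine inductions.
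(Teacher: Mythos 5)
Your overall architecture --- induction on derivations for soundness, a canonical model of prime theories for completeness --- is exactly the route the paper takes (its proof consists of precisely these two sentences, deferring all details to Artemov--Protopopescu), and most of your sketch is sound: the $\mathsf{K}$ case, persistence of $\Box$-formulas via ``$x\leq y$ and $yEz$ imply $xEz$'', the second frame condition in the canonical model, and the $\Box$-case of the truth lemma (deduction theorem, necessitation and $\mathsf{K}$, then a Lindenbaum extension) are all correct.

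There is, however, one genuine gap, and it surfaces in both halves of your argument at the same spot: the relationship between $E$ and $\leq$. For co-reflection you correctly observe that ``$x\leq y$ implies $xEy$'' is not enough, but your proposed repair --- proving monotonicity of $\vDash$ along $E$ by a simultaneous induction --- cannot succeed from that inclusion: with $\leq\subseteq E$ alone nothing prevents $x$ from $E$-seeing a world that does not $\leq$-refine $x$, where $A$ may fail; indeed $\leq\subseteq E$ together with reflexivity of $\leq$ would validate the \emph{reflection} scheme $\Box A\rightarrow A$, which is not a theorem of $\mathbb{IEL}^{-}$. The operative condition (the one in Artemov--Protopopescu, and the one your canonical model actually delivers) is the converse inclusion $E\subseteq\leq$: every $E$-successor is a $\leq$-successor. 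From this, persistence of forcing along $E$ is an immediate corollary of ordinary Kripke monotonicity along $\leq$, and co-reflection follows at once --- no simultaneous induction is needed. Correspondingly, in the canonical model the condition to verify is $\Gamma E\Delta\Rightarrow\Gamma\subseteq\Delta$, not $\Gamma\subseteq\Delta\Rightarrow\Gamma E\Delta$, and this is exactly where co-reflection enters: $A\in\Gamma$ gives $\Box A\in\Gamma$, hence $A\in\{B:\Box B\in\Gamma\}\subseteq\Delta$. Your paragraph circles this fact (you write down $\Gamma\subseteq\{A:\Box A\in\Gamma\}$) without drawing the conclusion, and leaves both verifications unresolved (``I would spell this out carefully''). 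Fixing the direction of this single inclusion closes both gaps; the rest of your sketch then goes through.
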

\begin{proof}
Soundness is proved by induction on the derivation of $A$.

Completeness is proved by a standard construction of a canonical model.

See \cite{artemov} for the details. 
\end{proof}
\section{Natural Deduction for intuitionistic belief}\label{2}
We want to develop a \emph{semantics of proofs} for the logic of intuitionistic belief. In order to do that, we now introduce a natural deduction system which is logically equivalent to $\mathbb{IEL}^{-}$, but which is also capable of a computational reading of the epistemic operator and of proofs involving this kind of modality.

Accordingly, the starting point is proving that the calculus $\mathsf{IEL}^{-}$ of natural deduction is sound and complete w.r.t. $\mathbb{IEL}^{-}$; then, we prove that proofs in $\mathsf{IEL}^{-}$ can be named by means of $\lambda$-terms as stated in the proofs-as-programs paradigm for intuitionistic logic also known as Curry-Howard correspondence. By using this formalism, we prove a normalization theorem for $\mathsf{IEL}^{-}$ stating that detours can be eliminated from all deductions. Afterwards, we present some results about the proof-theoretic behaviour of $\mathsf{IEL}^{-}$.

In the next section, we extend such a correspondence to category theory for showing the underlying structure of the operator for intuitionistic belief.

\subsection{System $\mathsf{IEL}^{-}$}
\begin{defn} Let $\mathsf{IEL}^{-}$ be the calculus extending the propositional fragment of $\mathsf{NJ}$ -- the natural deduction calculus for intuitionistic logic as presented in \cite{vandalen} -- by the following rule:
\begin{prooftree} \AxiomC{$\Gamma_{1}$}\noLine\UnaryInfC{$\vdots$}\noLine\UnaryInfC{$\Box A_{1}$}\AxiomC{$\,$}\noLine\UnaryInfC{$\,$}\noLine\UnaryInfC{$\cdots$}\AxiomC{$\Gamma_{n}$}\noLine\UnaryInfC{$\vdots$}\noLine\UnaryInfC{$\Box A_{n}$}\AxiomC{$[A_{1},\cdots , A_{n}],\Delta$}\noLine\UnaryInfC{$\vdots$}\noLine\UnaryInfC{$B$}\singleLine\RightLabel{$\mathsmaller{\Box-intro}$}\QuaternaryInfC{$\Box B$}
\end{prooftree}
where $\Gamma$ and $\Delta$ are \emph{sets of occurrences of formulae}, and all $A_{1},\cdots,A_{n}$ are discharged.

Note that this calculus differs from the system introduced in \cite{depaivaeike} by allowing the set $\Delta$ of additional hypotheses is the subdeduction of $B$.
\end{defn}  

Let's immediately check that we are dealing with the same logic presented in \cite{artemov}
\begin{lem} $\Gamma\vdash_{\mathsf{IEL}^{-}}A$ iff $\Gamma\vdash_{\mathbb{IEL}^{-}}A$.
\end{lem}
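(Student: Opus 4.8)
The plan is to prove the two directions separately by induction on derivations, reducing everything to the equivalence of the two presentations of the modal machinery, since the propositional fragments coincide and $\mathsf{NJ}$ is already known to be equivalent to intuitionistic propositional logic. In fact it suffices to show (a) that every axiom and rule of $\mathbb{IEL}^{-}$ is admissible in $\mathsf{IEL}^{-}$, and (b) that the single rule $\Box\text{-}intro$ of $\mathsf{IEL}^{-}$ is admissible in $\mathbb{IEL}^{-}$; the claim then follows by a routine induction in each direction, threading hypothesis sets through unchanged.

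For the direction $\Gamma\vdash_{\mathbb{IEL}^{-}}A \Rightarrow \Gamma\vdash_{\mathsf{IEL}^{-}}A$, I would first observe that intuitionistic propositional axioms are derivable in $\mathsf{NJ}\subseteq\mathsf{IEL}^{-}$ and that $\mathit{MP}$ is just $\rightarrow$-elimination. It remains to derive the co-reflection scheme $A\rightarrow\Box A$ and the $\mathsf{K}$ scheme $\Box(A\rightarrow B)\rightarrow\Box A\rightarrow\Box B$ using $\Box\text{-}intro$. Co-reflection is immediate: from the open assumption $A$, apply $\Box\text{-}intro$ with $n=0$ (no boxed premises) and $\Delta$ containing $A$ — wait, more carefully, with $n=1$, boxed premise absent, one simply takes the subdeduction consisting of the single assumption $A$ and concludes $\Box A$, then discharges $A$ by $\rightarrow$-introduction. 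Actually the cleanest route is: assume $A$; by $\Box\text{-}intro$ with zero $\Box$-premises and the trivial derivation of $A$ from $A$ (here $A\in\Delta$), infer $\Box A$; discharge to get $A\rightarrow\Box A$. For $\mathsf{K}$, assume $\Box(A\rightarrow B)$ and $\Box A$; apply $\Box\text{-}intro$ with these two boxed premises, discharging the minor formulas $A\rightarrow B$ and $A$ into a subderivation that concludes $B$ by $\rightarrow$-elimination; this yields $\Box B$, and two $\rightarrow$-introductions complete the derivation. Necessitation is the special case of $\Box\text{-}intro$ with $n=0$ and empty $\Delta$ applied to a closed derivation of $A$.

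For the converse direction $\Gamma\vdash_{\mathsf{IEL}^{-}}A \Rightarrow \Gamma\vdash_{\mathbb{IEL}^{-}}A$, I would induct on the $\mathsf{IEL}^{-}$-derivation; all cases except $\Box\text{-}intro$ are handled by the known equivalence of $\mathsf{NJ}$ with intuitionistic logic together with the deduction theorem for $\mathbb{IEL}^{-}$ (Proposition, item (ii)). For the $\Box\text{-}intro$ case, suppose the rule is applied with boxed premises $\Box A_1,\dots,\Box A_n$ (derived from $\Gamma_1,\dots,\Gamma_n$) and a subderivation of $B$ from $A_1,\dots,A_n$ together with extra open assumptions $\Delta$. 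By the induction hypothesis and the deduction theorem, from $\Delta$ we get $\mathbb{IEL}^{-}$-derivations of each $\Box A_i$ (from $\Gamma_i$) and a derivation of $A_1\rightarrow\cdots\rightarrow A_n\rightarrow B$ from no hypotheses beyond $\Delta$; applying necessitation to the latter and then the $\mathsf{K}$ axiom $n$ times, together with the $\Box A_i$, yields $\Box B$ from $\bigcup_i\Gamma_i\cup\Delta$ in $\mathbb{IEL}^{-}$, as required.

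The main obstacle is bookkeeping rather than conceptual: one must be careful about how the open assumptions $\Delta$ in the minor premise of $\Box\text{-}intro$ are threaded through — they remain genuine hypotheses (they are \emph{not} discharged, unlike the $A_i$), so when simulating the rule in $\mathbb{IEL}^{-}$ one applies necessitation only after the $A_i$ have been abstracted out by the deduction theorem, keeping $\Delta$ on the left of the turnstile throughout; applying necessitation prematurely to something still depending on $\Delta$ would be unsound. Dually, in the forward direction one must check that $\Box\text{-}intro$ as \emph{stated} (with its side condition that all $A_1,\dots,A_n$ be discharged) is exactly matched by the iterated $\mathsf{K}$-plus-necessitation pattern, and that the presence of a nonempty $\Delta$ — the feature distinguishing this calculus from the one in \cite{depaivaeike} — causes no difficulty, since $\Delta$ simply persists as a context on both sides. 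Once these points are made precise the inductions are entirely routine.
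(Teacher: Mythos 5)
Your proposal follows essentially the same route as the paper's proof: the forward direction derives co-reflection and $\mathsf{K}$ by exactly the two applications of $\Box$-intro the paper exhibits (co-reflection with zero boxed premises and $A\in\Delta$, $\mathsf{K}$ with the two boxed premises and the discharged minor hypotheses), and the backward direction abstracts the $A_i$ by the deduction theorem, boxes the resulting implication, and applies $\mathsf{K}$ to recover $\Box B$ from the $\Box A_i$; the paper's use of a single conjunction $A_1\wedge\cdots\wedge A_n\rightarrow B$ versus your curried $A_1\rightarrow\cdots\rightarrow A_n\rightarrow B$ with $n$ applications of $\mathsf{K}$ is immaterial. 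The one point that needs repair is your justification of the boxing step. You invoke ``necessitation'' on a formula derived from the undischarged hypotheses $\Delta$, and then in your closing paragraph you yourself declare that applying necessitation to something still depending on $\Delta$ would be unsound --- so as written you perform exactly the step you disavow. The resolution, which is what the paper does, is to use the \emph{co-reflection axiom scheme} $C\rightarrow\Box C$ together with modus ponens: since it is an axiom, it is available under arbitrary hypotheses, so $\Delta\vdash C$ yields $\Delta\vdash\Box C$ directly. In other words, in $\mathbb{IEL}^{-}$ necessitation-under-hypotheses is admissible precisely \emph{because} of co-reflection (this is also why the calculus can tolerate a nonempty $\Delta$ in $\Box$-intro at all, unlike the system of the de~Paiva--Ritter paper); your worry would be legitimate in a normal modal logic without co-reflection, but here it dissolves. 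With that substitution the argument is complete and coincides with the paper's.
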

\begin{proof}
Assume $\Gamma\vdash_{\mathbb{IEL}^{-}}A$. We proceed by induction on the derivation.
\begin{itemize}
\item Intuitionistic cases are dealt with $\mathsf{NJ}$ propositional rules;
\item $\mathsf{K}$: \AxiomC{$\Box(A\rightarrow B)$}\AxiomC{$\Box A$}\AxiomC{$[A\rightarrow B, A]$}\noLine\UnaryInfC{$\vdots$}\noLine\UnaryInfC{$B$}\singleLine\RightLabel{$\mathsmaller{\Box-intro}$}\TrinaryInfC{$\Box B$}\DisplayProof ;
\item co-reflection: \AxiomC{$[A]^{1}$}\RightLabel{$\mathsmaller{\Box-intro}$}\UnaryInfC{$\Box A$}\RightLabel{$\mathsmaller{\rightarrow-intro:1}$}\UnaryInfC{$A\rightarrow\Box A$}\DisplayProof .
\end{itemize}

Conversely, assume $\Gamma\vdash_{\mathsf{IEL}^{-}}A$. We consider only the $\Box$-intro rule:
By induction hypothesis, we have $\Gamma_{1}\vdash_{\mathbb{IEL}^{-}}\Box A_{1},\cdots,$ \mbox{$\Gamma_{n}\vdash_{\mathbb{IEL}^{-}}\Box A_{n},$} and $A_{1},\cdots,A_{n},\Delta\vdash_{\mathbb{IEL}^{-}}B$. Then we have \mbox{$\Gamma_{1},\cdots,\Gamma_{n}\vdash_{\mathbb{IEL}^{-}}\Box A_{1},\cdots,\Box A_{n},$} and, by the deduction theorem for $\mathbb{IEL}^{-}$ and ordinary logic, \sloppy{\mbox{$\Delta\vdash_{\mathbb{IEL}^{-}}A_{1}\wedge\cdots\wedge A_{n}\rightarrow B$.}} 

By co-reflection \mbox{$\mathbb{IEL}^{-}\vdash(A_{1}\wedge\cdots\wedge A_{n}\rightarrow B)\rightarrow\Box(A_{1}\wedge\cdots\wedge A_{n}\rightarrow B)$,} and by $\mathsf{K}$-scheme $\mathbb{IEL}^{-}\vdash\Box(A_{1}\wedge\cdots\wedge A_{n}\rightarrow B)\rightarrow\Box(A_{1}\wedge\cdots\wedge A_{n})\rightarrow\Box B$. Hence we have $\Delta\vdash_{\mathbb{IEL}^{-}}\Box(A_{1}\wedge\cdots\wedge A_{n})\rightarrow\Box B$, whence, by modal logic, we obtain $\Delta\vdash_{\mathbb{IEL}^{-}}\Box A_{1}\wedge\cdots\wedge\Box A_{n}\rightarrow\Box B$, which gives $\Gamma_{1},\cdots,\Gamma_{n},\Delta\vdash_{\mathbb{IEL}^{-}}\Box B$, as desired.
\end{proof}
\subsection{Normalization}

In order to eliminate potential detours from $\mathsf{IEL}^{-}$-deduction we introduce the following proof rewritings:
\vspace{.28cm}
\begin{longtable}{l l}
\hline
 & \\
($\iota$) & \AxiomC{$\Gamma$}\noLine\UnaryInfC{$\vdots$}\noLine\UnaryInfC{$\Box A$}\AxiomC{$[A]$}\RightLabel{$\mathsmaller{\Box-intro}$}\BinaryInfC{$\Box A$}\DisplayProof$\;\leadsto\qquad\;$\AxiomC{$\Gamma$}\noLine\UnaryInfC{$\vdots$}\noLine\UnaryInfC{$\Box A$}\DisplayProof \\
 & \\
\hline
& \\
($\delta$) & \AxiomC{$\Gamma$}\noLine\UnaryInfC{$\vdots$}\noLine\UnaryInfC{$\vec{\Box A}$}\AxiomC{$[\vec{A}]^{1},\vec{C}$}\noLine\UnaryInfC{$\vdots$}\noLine\UnaryInfC{$B$}\RightLabel{$\mathsmaller{\Box-intro:1}$}\BinaryInfC{$\Box B$}\AxiomC{$\Delta$}\noLine\UnaryInfC{$\vdots$}\noLine\UnaryInfC{$\vec{\Box D}$}\AxiomC{$[B,\vec{D}]^{2},\vec{E}$}\noLine\UnaryInfC{$\vdots$}\noLine\UnaryInfC{$F$}\RightLabel{$\mathsmaller{\Box-intro:2}$}\TrinaryInfC{$\Box F$}\DisplayProof$\leadsto\qquad$  \\ $\qquad$ \\
 & \AxiomC{$\Gamma$}\noLine\UnaryInfC{$\vdots$}\noLine\UnaryInfC{$\vec{\Box A}$}\AxiomC{$\Delta$}\noLine\UnaryInfC{$\vdots$}\noLine\UnaryInfC{$\vec{\Box D}$}\AxiomC{$[\vec{A}]^{1},\vec{C}$}\noLine\UnaryInfC{$\vdots$}\noLine\UnaryInfC{$B$}\AxiomC{$[\vec{D}]^{1},\vec{E}$}\noLine\BinaryInfC{$\vdots$}\noLine\UnaryInfC{$F$}\RightLabel{$\mathsmaller{\Box-intro:1}$}\TrinaryInfC{$\Box F$}\DisplayProof  \\
 & \\
\hline
\end{longtable}
Note that ($\iota$) eliminates a useless application of $\Box$-intro, while ($\delta$) collapses two $\Box$-intros into a single one.
\pagebreak
\begin{oss}[Proofs-as-programs]\label{pasp} Curry-Howard correspondence permits a functional reading of proofs in $\mathsf{NJ}$, once one recognises the following mapping:
\begin{scriptsize}

\vspace{.28cm}
\begin{longtable}{l l l p{3.2cm}}
$f\equiv A$ & $\longmapsto$ & $x_{i}^{A}$, & where $i$ is the parcel of the hypothesis $A$ \\
 & & & \\
$f\equiv$\alwaysNoLine\AxiomC{$f_{1}$}\UnaryInfC{$A$}\AxiomC{$f_{2}$}\UnaryInfC{$B$}\alwaysSingleLine\BinaryInfC{$A\wedge B$}\DisplayProof &  $\longmapsto$ & $\langle t^{A},s^{B}\rangle$, & where $t^{A}$, $s^{B}$ correspond to $f_{1}$ and $f_{2}$ resp. \\
 & & & \\
$f\equiv$\alwaysNoLine\AxiomC{$f'$}\UnaryInfC{$A\wedge B$}\alwaysSingleLine\UnaryInfC{$A$}\DisplayProof &  $\longmapsto$ & $\pi_{1}.t^{A\times B}$, & where $t^{A\times B}$ corresponds to $f'$ \\
 & & & \\
$f\equiv$\alwaysNoLine\AxiomC{$f'$}\UnaryInfC{$A\wedge B$}\alwaysSingleLine\UnaryInfC{$B$}\DisplayProof &  $\longmapsto$ & $\pi_{2}.t^{A\times B}$, & where $t^{A\times B}$ corresponds to $f'$ \\
 & & & \\
$f\equiv$\alwaysNoLine\AxiomC{$f'$}\UnaryInfC{$B$}\alwaysSingleLine\UnaryInfC{$A\rightarrow B$}\DisplayProof &  $\longmapsto$ & $\lambda x_{i}^{A}. t^{B}$, & where $t^{B}$ corresponds to $f'$ and $i$ is the parcel of discharged hypotheses $A$ \\
 & & & \\
$f\equiv$\alwaysNoLine\AxiomC{$f_{1}$}\UnaryInfC{$A\rightarrow B$}\AxiomC{$f_{2}$}\UnaryInfC{$A$}\alwaysSingleLine\BinaryInfC{$B$}\DisplayProof &  $\longmapsto$ & $t^{A\rightarrow B}s^{A}$, & where $t^{A\rightarrow B}$, $s^{A}$ correspond to $f_{1}$ and $f_{2}$ resp. \\
& & & \\
$f\equiv$\alwaysNoLine\AxiomC{$f'$}\UnaryInfC{$A$}\alwaysSingleLine\UnaryInfC{$A\vee B$}\DisplayProof &  $\longmapsto$ & $\;\mathsf{in}_{1}.t^{A}$, & where $t^{A}$ corresponds to $f'$ \\
& & & \\
$f\equiv$\alwaysNoLine\AxiomC{$f'$}\UnaryInfC{$B$}\alwaysSingleLine\UnaryInfC{$A\vee B$}\DisplayProof &  $\longmapsto$ & $\;\mathsf{in}_{2}.s^{B}$, & where $s^{B}$ corresponds to $f'$ \\
& & & \\
$f\equiv$\AxiomC{$f'$}\noLine\UnaryInfC{$A\vee B$}\AxiomC{$[A]$}\noLine\UnaryInfC{$\vdots$}\noLine\UnaryInfC{$C$}\AxiomC{$[B]$}\noLine\UnaryInfC{$\vdots$}\noLine\UnaryInfC{$C$}\TrinaryInfC{$C$}\DisplayProof & $\longmapsto$ & $\mathsf{C}(t,(x^{A}.t_{1}),(y^{B}.t_{2}))$ & where $\mathsf{C}$ bounds all occurrences of $x$ in $t_{1}$ and all occurrences of $y$ in $t_{2}$, and $t, t_{1}, t_{2}$ correspond to $f'$, the subdeduction of $C$ from $A$, and the subdeduction of $C$ from $B$, resp. \\
& & & \\
$f\equiv$\AxiomC{$f'$}\noLine\UnaryInfC{$\bot$}\UnaryInfC{$A$}\DisplayProof & $\longmapsto$ & $\mathsf{E}_{A}t$ & where $t$ corresponds to $f'$ \\
& & & \\
$f\equiv$\AxiomC{$f'$}\noLine\UnaryInfC{$A$}\UnaryInfC{$\top$}\DisplayProof & $\longmapsto$ & $(\mathsf{U}t)$ & where $t$ correspond to $f'$.
\end{longtable}
\end{scriptsize}

By imposing specific rewritings -- which are clearly defined in \cite{girard} -- we obtain the complete engine of \sloppy\mbox{$\lambda$-calculus} associated to the propositional fragment of $\mathsf{NJ}$.

Since $\mathsf{IEL}^{-}$ consists also of a rule for $\Box$, we need to extend the grammar of such a typed $\lambda$-calculus as follows:

$$ T ::= \mathsf{1}\;|\;\mathsf{0}\;|\;p\;|\; A \rightarrow B\; |\; A \times B\; |\; A + B\; |\; \Box A$$
\begin{tabular}{l l}
$t ::=\;$ & $ x\; |\;\mathsf{E}(t)\;|\;\mathsf{U}(t)\;|\; \lambda x:A. t:B\; |\; t_{1}t_{2}\; |\; \langle t_{1},t_{2}\rangle\;|\;\pi_{1}(t)\;|\;\pi_{2}(t)\;|$ \\
 & $\mathsf{in}_{1}(a:A)\;|\;\mathsf{in}_{2}(b:B)\;|\;\mathsf{C}(t, x.t_{1}, y.t_{2})\;|\;$\\ & $\mathsf{box}[\vec{x}:\vec{A}]. \vec{t}:\vec{\Box A} \;\mathsf{in}\; (s:B): \Box B$ .\\ 
\end{tabular}

\vspace{.63cm}

As for $\mathsf{NJ}$, a modal $\lambda$-calculus is obtained by decorating \mbox{$\mathsf{IEL}^{-}$-deductions} with proof names. 

Concerning the modality, we see that in the proof-term \mbox{$\mathsf{box}[\vec{x}:\vec{A}]. \vec{t}:\vec{\Box A} \;\mathsf{in}\; (s:B): \Box B$} for the $\Box$-intro rule, a term $s$ of type $B$ -- with $\vec{x}$ of type $A_{1},\cdots,A_{n}$ among its free variables -- is used along with terms $\vec{t}$ of type $\Box A_{1},\cdots,\Box A_{n}$ in order to obtain a term of type $\Box B$ not depending on the variables $\vec{x}$.

Proof rewritings can be then expressed by imposing appropriate reductions of $\lambda$-terms:\begin{small}
$$\begin{array}{l}
\mathsf{box}[x]. t \;\mathsf{in}\; x >_{\iota} t \\
\mathsf{box}[x_{1},\cdots,x_{i-1},x_{i},x_{i+1},\cdots,x_{n}].(t_{1},\cdots,t_{i-1},(\mathsf{box}[\vec{y}].\vec{s}\;\mathsf{in}\;t_{i}),t_{i+1},\cdots,t_{n})\;\mathsf{in}\; r\,\\ >_{\delta} \mathsf{box}[x_{1},\cdots,x_{i-1},\vec{y},x_{i+1},\cdots,x_{n}].(t_{1},\cdots,t_{i-1},\vec{s},t_{i+1},\cdots,t_{n})\;\mathsf{in}\;r[t_{i}/x_{i}] 
\end{array}$$

\end{small}

Assuming this reading of deductions as programs, normalization now becomes just the execution of a program written in our modal $\lambda$-calculus; it assures then consistency of $\mathsf{IEL}^{-}$, being also useful to prove its analyticity, and hence its decidability. Accordingly, the quest for normalizing natural deduction systems is not limited to the proofs-as-programs paradigm, and its origins are actually at the very core of proof theory: We refer the reader to \cite{structural} and \cite{vonplato} for the technical and historical aspects of the research field, respectively.

\end{oss}

We write $\rhd$ for the transitive closure of the relation obtained by combining $>_{\iota}$ and $>_{\delta}$, together with standard rewritings of $\lambda$-calculus as presented in Remark \ref{pasp}. An algebra of $\lambda$-terms is then obtained by considering the reflexive, symmetric, transitive closure $\overset{\mathsmaller{\Box}}{=}$ of $\rhd$, i.e. by combining the reflexive, symmetric, transitive closure $\overset{\iota}{=}$ and $\overset{\delta}{=}$ of $>_{\iota}$ and $>_{\delta}$, respectively, along with standard rewriting rules for typed $\lambda$-calculus.\footnote{Note that $\overset{\iota}{=}$ is just a special case of $\overset{\delta}{=}$. We decide to adopt this redundant system of rewriting since $\overset{\iota}{=}$ has a straightforward interpretation in category theory: See Section \ref{3.2}.}

We can now prove that every deduction in $\mathsf{IEL}^{-}$ can be uniquely reduced to a proof containing no detours.

\begin{thm}\label{sn} Strong normalization holds for $\mathsf{IEL}^{-}$.
\end{thm}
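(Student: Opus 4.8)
The plan is to derive the theorem from strong normalization of the simply-typed $\lambda$-calculus with $\to,\times,+,\mathsf{1},\mathsf{0}$ -- the calculus attached to the propositional fragment of $\mathsf{NJ}$ -- which is classical (see \cite{girard} and its standard extensions). To this end I would introduce a \emph{modality-erasing translation} $(-)^{\circ}$. On types it commutes with every connective except that $(\Box A)^{\circ}:=\mathsf{1}\to A^{\circ}$, and it is extended to contexts pointwise. On proof-terms it is the identity on all the $\mathsf{NJ}$-constructors and
\[
\bigl(\mathsf{box}[\vec{x}{:}\vec{A}].\,t_{1},\dots,t_{n}:\vec{\Box A}\ \mathsf{in}\ (s{:}B)\bigr)^{\circ}\ :=\ \lambda z{:}\mathsf{1}.\ s^{\circ}\bigl[(t_{1}^{\circ}z)/x_{1},\dots,(t_{n}^{\circ}z)/x_{n}\bigr]
\]
with $z$ fresh; the right-hand side inhabits $\mathsf{1}\to B^{\circ}=(\Box B)^{\circ}$, so the translation is type-correct. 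Two preliminary inductions on proof-terms then give that $(-)^{\circ}$ sends a derivable judgement $\Gamma\vdash M:C$ to $\Gamma^{\circ}\vdash M^{\circ}:C^{\circ}$ and commutes with substitution, $(M[N/x])^{\circ}=M^{\circ}[N^{\circ}/x]$ (the variable convention absorbing the fresh $z$'s).

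The heart of the proof is the \emph{simulation lemma}: if $M$ rewrites to $M'$ in one step -- by $>_{\iota}$, by $>_{\delta}$, or by one of the $\mathsf{NJ}$-rewritings of Remark \ref{pasp} -- then $M^{\circ}$ reduces to $M'^{\circ}$ by a non-empty sequence of $\beta$-, $\eta$- and commuting-conversion steps. The $\mathsf{NJ}$-rewritings transfer verbatim, as $(-)^{\circ}$ is transparent on their constructors. For $(\iota)$ one computes $(\mathsf{box}[x].t\ \mathsf{in}\ x)^{\circ}=\lambda z{:}\mathsf{1}.(t^{\circ}z)$ with $z\notin\mathrm{FV}(t^{\circ})$, which $\eta$-contracts to $t^{\circ}$. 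For $(\delta)$ the $i$-th premise of the outer box is itself a box, so its translation has the shape $\lambda w{:}\mathsf{1}.\,u^{\circ}\bigl[\vec{s}^{\circ}w/\vec{y}\bigr]$; the $\beta$-redex $(\lambda w{:}\mathsf{1}.\cdots)\,z$ thereby occurring inside $M^{\circ}$ contracts, and after reorganising the resulting nested simultaneous substitutions -- using that $\vec{y}$ does not occur in the outer body $r$ and that the inner body $u$ has none of the outer bound variables $\vec{x}$ free -- one checks that the outcome is exactly $M'^{\circ}$. Thus every $\mathsf{IEL}^{-}$-step is mirrored by at least one genuine reduction step of the target calculus, so an infinite reduction sequence in $\mathsf{IEL}^{-}$ would induce an infinite $\beta\eta$-plus-commuting reduction in the simply-typed $\lambda$-calculus of $\mathsf{NJ}$; this is impossible, as that system is strongly normalizing and remains so after adjoining $\eta$. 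Uniqueness of the resulting detour-free form -- implicit in the statement -- then follows along standard lines, by checking local confluence on the finitely many critical pairs between $>_{\iota}$, $>_{\delta}$ and the $\mathsf{NJ}$-rewritings and invoking Newman's Lemma.

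The step I expect to be the real obstacle is the $(\delta)$-case of the simulation lemma: the three simultaneous substitutions $[\vec{t}^{\circ}z/\vec{x}]$, $[\vec{s}^{\circ}z/\vec{y}]$ and $[u^{\circ}/x_{i}]$, together with the renaming of the outer box's bound variables that $(\delta)$ forces, must be disentangled with enough care that the two translated terms are literally equal up to $\alpha$-conversion, not merely joinable. A self-contained alternative, bypassing the translation, is a Tait--Girard reducibility argument: since $\Box$ carries no elimination rule, one may set $M\in\mathsf{RED}_{\Box A}$ to mean that $M$ is strongly normalizing and that every $\mathsf{box}$-headed reduct $\mathsf{box}[\vec{y}{:}\vec{D}].\vec{v}\ \mathsf{in}\ (u{:}A)$ of $M$ satisfies $u[\vec{w}/\vec{y}]\in\mathsf{RED}_{A}$ for all $\vec{w}\in\mathsf{RED}_{\vec{D}}$; the adequacy lemma is then proved by induction on proof-terms, and its only new case -- the $\Box$-introduction one -- is exactly where the $(\iota)$- and $(\delta)$-redexes created when a box sits among the premises of another box must be handled, in the same way as above.
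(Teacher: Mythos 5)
Your overall strategy --- reducing strong normalization of the modal calculus to that of the simply-typed $\lambda$-calculus with $\times,+,\mathsf{1},\mathsf{0}$ via a reduction-preserving translation --- is exactly the paper's, but your choice of translation has a genuine gap: it is \emph{erasing}, and this breaks the simulation lemma at its one quantitatively essential claim. Since you set $(\mathsf{box}[\vec{x}].\vec{t}\ \mathsf{in}\ s)^{\circ}=\lambda z{:}\mathsf{1}.\,s^{\circ}[(t_{1}^{\circ}z)/x_{1},\dots,(t_{n}^{\circ}z)/x_{n}]$, whenever a bound variable $x_{j}$ does not occur free in $s$ the subterm $t_{j}^{\circ}$ simply disappears from the translated term. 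Consequently (i) a rewriting step performed \emph{inside} such a premise $t_{j}$ is mapped to the empty reduction sequence, and (ii) a $(\delta)$-contraction at position $i$ with $x_{i}\notin\mathrm{FV}(r)$ gives $M^{\circ}=M'^{\circ}$, again zero target steps. So the assertion that ``every $\mathsf{IEL}^{-}$-step is mirrored by at least one genuine reduction step of the target'' is false for your $(-)^{\circ}$, and an infinite $\rhd$-sequence that eventually lives inside a discarded premise (or consists of vacuous $(\delta)$-steps) is not excluded by your argument. Appealing at that point to strong normalization of the typable subterm $t_{j}$ would be circular, since that is precisely what the translation is supposed to deliver. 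To repair the proof you must either make the translation non-erasing or add a secondary well-founded measure controlling the erased steps; as written, neither is done.

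The paper takes the first option: it sets $|\Box A|:=(|A|\rightarrow q)\rightarrow q$ for a fixed atom $q$ and translates the box in continuation-passing style, $|\mathsf{box}[\vec{x}].(\vec{t})\ \mathsf{in}\ s|:=\lambda k.\,|t_{1}|(\lambda x_{1}.\cdots|t_{n}|(\lambda x_{n}.\,k|s|)\cdots)$, so that every $|t_{j}|$ occurs applied to a continuation irrespective of whether $x_{j}$ occurs in $s$; each $(\iota)$-, $(\delta)$- and internal step is then simulated by at least one $\beta\eta$/permutation step of the target, and strong normalization transfers. Your computations for the $(\iota)$- and the non-vacuous $(\delta)$-case are correct, and your worry about disentangling the simultaneous substitutions is legitimate but not where the real obstacle lies. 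The fallback reducibility argument you sketch is the route the paper itself only mentions in a footnote as ``under development''; as stated it leaves the key point (closure of the candidate for $\Box A$ under the $(\delta)$-redexes created when a box occurs among the premises of another box) unproved, so it cannot substitute for the main argument. Finally, the confluence and uniqueness discussion at the end is not part of Theorem~\ref{sn}; the paper handles it separately via Newman's lemma, much as you suggest.
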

\begin{proof}
We define a translation\footnote{This function is introduced in \cite{kakutani} to prove detour-elimination for the implicational fragment of basic intuitionistic modal logic $\mathbb{IK}$ by reducing the problem to  normalization of simple type theory. Here we adopt the mapping to consider also product, co-product, empty, and unit types, keeping the original strategy due to \cite{degroote}. A different proof based on Tait's computability method \cite{tait} should also be possible and is under development by the author.} $|-|$ from the $\lambda$-calculus of \mbox{$\mathsf{IEL}^{-}$-deductions} to typed $\lambda$-calculus with products, sums, empty and unit types:
$$\begin{array}{l l l}
|\mathtt{0}| & := & \mathtt{0} \\
|\mathtt{1}| & := & \mathtt{1} \\
|p| & := & p \\
|A\rightarrow B| & := & |A|\rightarrow|B| \\
|A\times B| & := & |A|\times|B| \\
|A+B| & := & |A|+|B| \\
|\Box A| & := & (|A|\rightarrow q)\rightarrow q\\
\end{array}$$
$$\begin{array}{l}
|x| :=  x\\
|c| := c\\
|\mathsf{E}(t)| := \mathsf{E}(|t|)\\
|\mathsf{U}(t)| := \mathsf{U}(|t|)\\
|\lambda x\, t| := \lambda x. |t|\\
|ts| := |t||s|\\
|(t,s)| := (|t|,|s|)\\
|\pi_{i}(t)| := \pi_{i}(|t|)\\
|\mathsf{C}(t, x.t_{1}, y.t_{2})|  := \mathsf{C}(|t|,x.|t_{1}|,y.|t_{2}|)\\
|\mathsf{in}_{i}(t)| :=  \mathsf{in}_{i}(|t|)\\
|\mathsf{box}[x_{1},\cdots,x_{n}]. (t_{1},\cdots,t_{n})\,\mathsf{in}\, s| := \lambda k.|t_{1}|(\lambda x_{1}.\cdots|t_{n}|(\lambda x_{n}.\, k|s|)\cdots)\\
\end{array}$$
where $q$ is specific atom type.
Then it is easy to see that $\overset{\mathsmaller{\Box}}{=}$ is preserved by this mapping: If $t\rhd t'$ in one step, then $|t|\gg|t'|$, where $\gg$ indicates the usual relation made of $\beta\eta$-reductions with permutations. Therefore, since typed \mbox{$\lambda$-calculus} with products, sums, empty and unit types is strongly normalizing,\footnote{See \cite{girard} and \cite{akama}.} so is our modal $\lambda$-calculus, and $\mathsf{IEL}^{-}$ also. In other terms: If $\mathsf{IEL}^{-}$ was not normalizing, then we would have an infinite \mbox{$\rhd$-reduction} starting from, say, $t:A$. By the previous note, this would lead to an infinite \mbox{$\gg$-reduction} starting from $|t|:|A|$, contradicting strong normalization of typed $\lambda$-calculus with $\times,+,\mathtt{1},\mathtt{0}$.
\end{proof}
\begin{lem}\label{cr} The modal $\lambda$-calculus of $\mathsf{IEL}^{-}$-deductions has the Church-Rosser property.
\end{lem}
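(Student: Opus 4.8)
The plan is to deduce the Church--Rosser property from strong normalization (Theorem~\ref{sn}) by way of Newman's Lemma, so that it suffices to establish \emph{local confluence} (the weak Church--Rosser property) of the one-step reduction relation combining $>_{\iota}$, $>_{\delta}$, and the standard $\beta\eta$-plus-permutation reductions of the underlying $\lambda$-calculus. Since the purely intuitionistic fragment is classical folklore (the $\lambda$-calculus with $\times,+,\mathtt 1,\mathtt 0$ is confluent), the work concentrates on the critical pairs that involve the new constructor $\mathsf{box}[\vec x].\vec t\,\mathsf{in}\,s$.

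First I would enumerate the ways two redexes can overlap inside a term containing a $\mathsf{box}$. The genuinely new overlaps are: (a) an $\iota$-redex $\mathsf{box}[x].t\,\mathsf{in}\,x$ in which $t$ itself, or one of the ambient contexts, contains another redex --- here the two contractions act on disjoint subterms (or one is nested strictly inside $t$), so the diamond closes immediately, possibly after a substitution step; (b) a $\delta$-redex whose inner $\mathsf{box}[\vec y].\vec s\,\mathsf{in}\,t_i$ is simultaneously an $\iota$-redex, i.e.\ $\vec y$ is a single variable and $t_i=y$ --- one checks that contracting the inner $\iota$ first and then closing up gives the same term as contracting $\delta$ first; (c) two $\delta$-redexes that are "stacked", a $\mathsf{box}$ nested inside a $\mathsf{box}$ nested inside a $\mathsf{box}$, where one verifies that the order of collapsing is immaterial because $\delta$ is essentially concatenating the binder-lists and performing the substitution $r[t_i/x_i]$, operations that commute; and (d) overlaps between a $\delta$- or $\iota$-step and a $\beta$- or permutation-step, e.g.\ a $\mathsf{box}$ appearing as the argument or body of a $\beta$-redex, or inside a case-distinction that is being permuted. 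In each case the redexes are applicative rather than overlapping at the same position, so the relevant square is closed by a routine (possibly multi-step) diagram chase; the only subtlety is bookkeeping substitutions, where one invokes the usual substitution lemma $u[v/x][w/y]=u[w/y][v[w/y]/x]$ for $y\notin\mathrm{FV}(v)$.

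The step I expect to be the main obstacle is case~(d) together with the interaction of $\delta$ with the commuting conversions for $+$ and $\mathtt 0$: when a $\mathsf{box}$ sits underneath a $\mathsf{C}(t,x.t_1,y.t_2)$ that is itself being pushed past a surrounding elimination, one must be careful that the permutation rewriting and the $\delta$-collapse can be performed in either order and still meet. This is not deep but is the place where a naive case analysis can miss a subcase; I would handle it by treating the $\mathsf{box}$ constructor uniformly as "just another term former" with respect to the permutation rules, exactly as $\mathsf{E}$ and $\mathsf{U}$ are treated in~\cite{girard}, so that the standard argument applies verbatim. Having checked local confluence on all critical pairs, Newman's Lemma together with Theorem~\ref{sn} yields confluence of $\rhd$, which is precisely the Church--Rosser property for $\overset{\mathsmaller{\Box}}{=}$: any two terms identified by $\overset{\mathsmaller{\Box}}{=}$ reduce to a common term, and in particular (combined with strong normalization) each $\overset{\mathsmaller{\Box}}{=}$-class has a unique normal form.
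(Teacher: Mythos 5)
Your proposal follows exactly the route the paper takes: establish weak (local) confluence and then combine it with strong normalization (Theorem~\ref{sn}) via Newman's Lemma. The paper simply asserts that weak Church--Rosser is ``straightforward,'' whereas you spell out the critical-pair analysis for the $\mathsf{box}$ constructor and its interaction with the commuting conversions; this is a correct filling-in of the same argument, not a different one.
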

\begin{proof}
It is straightforward to prove weak Church-Rosser property for our calculus. By Theorem \ref{sn}, the modal $\lambda$-calculus of $\mathsf{IEL}^{-}$-deductions has the Church-Rosser property.\footnote{A proof of Newman's result relating strong normalization and Church-Rosser property is given in \cite[Prop.~3.6.2]{sorensen}.}
\end{proof}
\begin{cor} Every $\mathsf{IEL}^{-}$-deduction reduces uniquely to a deduction without detours.
\end{cor}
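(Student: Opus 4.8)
The plan is to read the statement through the Curry--Howard dictionary of Remark~\ref{pasp}. An $\mathsf{IEL}^{-}$-deduction $f$ of $B$ from open hypotheses $\Gamma$ is coded by a typed proof-term $t^{B}$, and this coding is a bijection — modulo renaming of discharged parcels — between deductions and well-typed terms of the modal $\lambda$-calculus. Under this dictionary a \emph{detour} in $f$, i.e. an introduction immediately cancelled by the matching elimination in the propositional part, an $(\iota)$-pattern, or a $(\delta)$-pattern, corresponds precisely to a redex of $t^{B}$ for the combined reduction relation $\rhd$ (the standard $\beta\eta$/permutation redexes of Remark~\ref{pasp} together with $>_{\iota}$ and $>_{\delta}$), and performing the associated proof rewriting corresponds to contracting that redex. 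Hence ``$f$ contains no detour'' is synonymous with ``$t^{B}$ is $\rhd$-normal'', and ``$f$ rewrites to $f'$'' with ``$t^{B}\rhd (t')^{B}$'' (allowing zero steps).

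First I would establish \textbf{existence}. Given any deduction $f$ with proof-term $t$, Theorem~\ref{sn} guarantees there is no infinite $\rhd$-reduction issuing from $t$, so every maximal reduction sequence starting at $t$ is finite and terminates in a $\rhd$-normal term $u$. Transporting $u$ back through the dictionary yields a detour-free deduction $g$ with the same conclusion and the same open hypotheses as $f$, obtained from $f$ by a finite sequence of the proof rewritings $(\iota)$, $(\delta)$ and the propositional ones.

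Then I would establish \textbf{uniqueness}. Suppose $f$ rewrites to detour-free deductions $g_{1}$ and $g_{2}$; on terms this says $t$ reduces to normal terms $u_{1}$ and $u_{2}$. By the Church--Rosser property (Lemma~\ref{cr}) there is a term $v$ to which both $u_{1}$ and $u_{2}$ reduce; since $u_{1}$ and $u_{2}$ admit no redex, both reductions must be empty, so $u_{1}=v=u_{2}$, whence $g_{1}=g_{2}$ up to the identification built into the dictionary. Combining the two parts gives the corollary; indeed it gives slightly more, namely that the detour-free form is reached from \emph{every} rewriting strategy.

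The only genuinely delicate point is the bookkeeping in the first paragraph: one must check that the translation of Remark~\ref{pasp} is a faithful, reduction-preserving correspondence — in particular that the $(\iota)$ and $(\delta)$ rewritings on deductions match $>_{\iota}$ and $>_{\delta}$ on terms step for step, and that the notion of ``the same deduction'' tacitly used in the uniqueness claim is precisely $\alpha$-equivalence of proof-terms (renaming of discharged-hypothesis parcels). Once this identification is in place, everything else is the routine ``strong normalization $+$ confluence $\Rightarrow$ unique normal forms'' argument already exploited in the proof of Lemma~\ref{cr}.
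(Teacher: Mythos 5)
Your argument is exactly the paper's: Theorem \ref{sn} gives termination of every reduction sequence, Lemma \ref{cr} gives confluence, and together these yield unique normal forms for proof-terms, hence for deductions via the Curry--Howard dictionary. The paper states this in one line; your more careful unpacking of the deduction/term correspondence is a faithful elaboration of the same proof, not a different route.
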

\begin{proof}
By Theorem \ref{sn} and Lemma \ref{cr}, any term of the modal \sloppy\mbox{$\lambda$-calculus} of \mbox{$\mathsf{IEL}^{-}$-deductions} has a unique normal form. 
\end{proof}

\subsection{Further Proof-Theoretic Properties}\label{ptheory}

Knowing that $\mathsf{IEL}^{-}$-deductions normalise, it is possible to investigate further on the relevant aspects of our system from the perspective of proof theory.

Following \cite{girard}, we will say that a deduction (or, equivalently, a proof-term) is \textbf{neutral}, iff it is a hypothesis, or its last rule is an elimination rule (E-rule, for short).

We immediately have the following

\begin{lem}\label{key} In any normal and neutral deduction $\Gamma\vdash_{\mathsf{IEL}^{-}} C$, $\Gamma\not =\varnothing$.
\end{lem}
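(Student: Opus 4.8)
The plan is to prove the statement by induction on the structure of a normal neutral deduction. The key observation is that a neutral deduction, by definition, is either a single hypothesis — in which case $\Gamma$ trivially contains that hypothesis and is nonempty — or it ends with an elimination rule. So the whole argument reduces to analysing, case by case, the possible last E-rules and showing that in each case the deduction cannot have an empty set of open assumptions, relying on normality to rule out the pathological configurations.

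First I would set up the case distinction on the last rule. For $\rightarrow$-elimination, the major premise is a deduction of $A \rightarrow C$; since the whole deduction is normal, this subdeduction cannot end with $\rightarrow$-introduction (that would be a detour), so it is itself neutral and normal, and the induction hypothesis gives it a nonempty set of assumptions, which are among $\Gamma$. The same pattern works for $\wedge$-elimination (the premise $A \wedge C$ or $C \wedge B$ must come from a neutral normal deduction, else there is a $\wedge$-detour) and for $\bot$-elimination (the premise $\bot$ comes from a neutral normal deduction by normality, so again the IH applies). For $\vee$-elimination the major premise of type $A \vee B$ must, by normality, come from a neutral normal subdeduction, so its assumptions are nonempty and contained in $\Gamma$. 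Throughout, the point is that normality forbids an introduction rule immediately feeding the major premise of the corresponding elimination, which forces the major-premise subdeduction to be neutral, so the induction hypothesis is available.

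The step I expect to be the main obstacle — or at least the one requiring the most care — is making sure the induction is properly well-founded and that the notion of ``the major premise subdeduction is neutral'' is justified in each case purely from normality, without circularity. One must be careful that neutrality is a property of the \emph{last rule only}, so the induction should be on the size (number of rule applications) of the deduction, with the induction hypothesis applied to the strictly smaller major-premise subdeduction, after one has argued it is neutral and normal. A second subtlety is that $\Box$-introduction is \emph{not} an elimination rule, so a neutral deduction never ends with $\Box$-intro; hence there is no separate modal case to handle here — the modal rule only matters insofar as it can appear inside subdeductions, but that does not affect the argument, since we only ever invoke the IH on neutral normal subdeductions. Once these points are pinned down, each case is a one-line application of the induction hypothesis, and the base case (a bare hypothesis) is immediate.
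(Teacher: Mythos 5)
Your proposal is correct and matches the paper's proof, which is simply stated as a ``straightforward induction on the height of the deduction'': your case analysis on the last E-rule, using normality to force the major-premise subdeduction to be neutral (no introduction rule other than the matching one could produce a formula of that shape, and the matching one would create a detour), is exactly the content that ``straightforward'' elides. Your observations that the base case is a bare hypothesis and that $\Box$-intro never occurs as the last rule of a neutral deduction are the right points to pin down.
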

\begin{proof}
Straightforward induction on the height of the deduction.
\end{proof}

By the previous lemma we can easily prove canonicity of proof-terms:

\begin{lem}[Canonicity] If $\mathsf{IEL}^{-}\vdash A$ is normal, then the last rule of the deduction is the appropriate introduction rule. 
\end{lem}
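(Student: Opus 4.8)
The plan is to argue by contradiction, using the structure of normal deductions together with Lemma~\ref{key}. Suppose $\mathsf{IEL}^{-}\vdash A$ has a normal deduction $\mathcal{D}$ whose last rule is \emph{not} an introduction rule. By inspection of the calculus, the last rule of $\mathcal{D}$ is then either an elimination rule or (in the degenerate case) $A$ is itself an undischarged hypothesis; but since the deduction is from the empty context, the latter is impossible. So the last rule is an E-rule, which means $\mathcal{D}$ is \emph{neutral}. Applying Lemma~\ref{key} to $\varnothing\vdash_{\mathsf{IEL}^{-}} A$ gives $\varnothing\neq\varnothing$, a contradiction. Hence the last rule of any normal deduction of a closed theorem must be an introduction rule, and it is the introduction rule for the principal connective of $A$ (the ``appropriate'' one) simply because introduction rules are syntax-directed: the $\rightarrow$-intro rule produces an implication, $\wedge$-intro a conjunction, $\vee$-intro a disjunction, $\top$-intro the unit, and $\Box$-intro a boxed formula.

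First I would make precise what ``normal'' means here, since the statement presupposes it: a deduction is normal when no detour is present, i.e.\ when the associated proof-term is in normal form with respect to $\rhd$ (the $\iota$- and $\delta$-rewritings together with the standard $\mathsf{NJ}$ reductions and permutative conversions). The key structural fact I need is the one packaged in Lemma~\ref{key}: a normal neutral deduction cannot have empty context. Its proof is the real content, by induction on height: a hypothesis has nonempty context trivially; and for each E-rule, normality forbids the major premise from being the conclusion of the matching I-rule, so the subderivation of the major premise is again normal and neutral (its last rule is an E-rule or it is a hypothesis — it cannot be an I-rule, or we would have a redex), hence by IH has nonempty context, and the context only grows downward. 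Since I am told I may assume Lemma~\ref{key}, this reduces to the short contradiction argument above.

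The one subtlety worth spelling out is the case analysis on the last rule of $\mathcal{D}$, because the rules of $\mathsf{IEL}^{-}$ split cleanly into I-rules ($\rightarrow$I, $\wedge$I, $\vee$I$_1$, $\vee$I$_2$, $\top$I, $\Box$-intro) and E-rules ($\rightarrow$E, $\wedge$E$_1$, $\wedge$E$_2$, $\vee$E, $\bot$E), plus the base case of a bare hypothesis. If the last rule is an I-rule we are already done, with the matching of connective to rule being immediate. If it is an E-rule, $\mathcal{D}$ is by definition neutral, and the empty-context hypothesis contradicts Lemma~\ref{key}. The case $\bot$E deserves a remark: its conclusion can be any formula $A$, so it is \emph{not} syntax-directed, which is exactly why one cannot read off the connective from an E-rule and why ruling out E-rules is essential rather than cosmetic; but it too is excluded here because a closed normal proof ending in $\bot$E would need a closed normal neutral subproof of $\bot$, again contradicting Lemma~\ref{key}.

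I do not expect any serious obstacle, since the heavy lifting is done by Lemma~\ref{key} and by normalization (Theorem~\ref{sn}, which guarantees that normal forms exist so the hypothesis ``$\mathsf{IEL}^{-}\vdash A$ is normal'' is not vacuous). The only place requiring a little care is making sure the case enumeration of last rules is exhaustive and that, in each E-rule case, the neutrality is genuine — in particular that the $\Box$-intro rule is correctly classified as an introduction rule (its conclusion $\Box B$ is built by the rule, even though the rule has several premises and discharges hypotheses), so that a normal closed deduction of $\Box B$ indeed ends in $\Box$-intro. With that settled, the statement follows in a couple of lines.
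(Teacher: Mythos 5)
Your proposal is correct and follows essentially the same route as the paper: the paper's proof is exactly the observation that a normal deduction from the empty context cannot be neutral by Lemma~\ref{key}, hence cannot end in an E-rule (or be a bare hypothesis), so it must end in the introduction rule for the principal connective. Your additional remarks --- the exhaustive case split on last rules, the non-syntax-directedness of $\bot$E, and the classification of $\Box$-intro as an I-rule --- are just explicit spellings-out of what the paper leaves implicit.
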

\begin{proof}
By Lemma \ref{key}, since $\Gamma=\varnothing$ here, and the deduction is normal, its last rule cannot be an E-rule.
\end{proof}

It is now straightforward to apply the canonicity lemma in order to obtain the following results:

\begin{cor}[Disjunction Property] If $\mathsf{IEL}^{-}\vdash A\vee B$, then $\mathsf{IEL}^{-}\vdash A$ or $\mathsf{IEL}^{-}\vdash B$.
\end{cor}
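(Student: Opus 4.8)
The plan is to apply the Canonicity Lemma directly. Suppose $\mathsf{IEL}^{-}\vdash A\vee B$. By the Corollary establishing that every $\mathsf{IEL}^{-}$-deduction reduces to a deduction without detours, we may assume the given deduction of $A\vee B$ is normal. Since it has an empty set of hypotheses, the Canonicity Lemma tells us its last rule must be the appropriate introduction rule — here, one of the two $\vee$-introduction rules.

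First I would split into the two cases according to which $\vee$-introduction rule was applied last. If the last rule is $\vee$-intro on the left, then the immediate subdeduction is a normal deduction of $A$ from no hypotheses, so $\mathsf{IEL}^{-}\vdash A$. Symmetrically, if the last rule is $\vee$-intro on the right, then we obtain a deduction of $B$ from no hypotheses, so $\mathsf{IEL}^{-}\vdash B$. In either case the disjunct is derivable in $\mathsf{IEL}^{-}$, which is exactly what the statement asserts.

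I expect no real obstacle here: the substantive work has already been done in establishing normalization (Theorem \ref{sn} together with Lemma \ref{cr} and its Corollary) and in proving the Canonicity Lemma via Lemma \ref{key}. The only point requiring a word of care is the legitimacy of passing to a normal deduction — but this is precisely guaranteed by the Corollary stating that every $\mathsf{IEL}^{-}$-deduction reduces (uniquely) to one without detours, so the normal form of the given deduction of $A\vee B$ is again a deduction of $A\vee B$ from the empty set of hypotheses. Everything else is the routine case analysis above.
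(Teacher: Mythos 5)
Your argument is correct and is precisely the proof the paper intends: normalize the closed deduction of $A\vee B$, invoke the Canonicity Lemma to conclude the last rule is a $\vee$-introduction, and read off a closed deduction of one of the disjuncts. The paper leaves this as "straightforward" after the Canonicity Lemma, and your write-up simply makes that routine application explicit.
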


\begin{cor} Reflection rule is admissible in $\mathsf{IEL}^{-}$: If $\mathsf{IEL}^{-}\vdash\Box A$, then $\mathsf{IEL}^{-}\vdash A$.
\end{cor}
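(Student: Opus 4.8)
The plan is to argue exactly as for the Disjunction Property, by combining normalization with the Canonicity Lemma. First I would take a derivation witnessing $\mathsf{IEL}^{-}\vdash\Box A$ and, by Theorem~\ref{sn}, replace it by a normal derivation of $\Box A$ with no open assumptions. By the Canonicity Lemma its last rule is the appropriate introduction rule, i.e.\ an instance of $\Box$-intro with conclusion $\Box A$; hence the derivation splits into side-derivations $\Gamma_{i}\vdash_{\mathsf{IEL}^{-}}\Box A_{i}$ for $i=1,\dots,n$ together with a subderivation $[A_{1},\dots,A_{n}],\Delta\vdash_{\mathsf{IEL}^{-}}A$, the whole of it closed. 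Since the open assumptions of the conclusion of a $\Box$-intro are the union of those of its premises minus the discharged $A_{i}$, each $\Gamma_{i}$ and the set $\Delta$ must in fact be empty: every side-derivation of $\Box A_{i}$ is closed, and the rightmost subderivation proves $A$ from (at most) the discharged hypotheses $A_{1},\dots,A_{n}$.

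I would then finish by induction on the size of the normal derivation, proving the slightly more general claim that every closed normal $\mathsf{IEL}^{-}$-derivation of a boxed formula $\Box A$ yields a closed derivation of $A$. If $n=0$, the rightmost subderivation is already a closed derivation of $A$. If $n\geq 1$, the side-derivations of the $\Box A_{i}$ are closed, normal (being subderivations of a normal derivation) and strictly smaller, so by the induction hypothesis there are closed derivations of each $A_{i}$; substituting these for the discharged occurrences of $A_{1},\dots,A_{n}$ in the rightmost subderivation produces a closed derivation of $A$, whence $\mathsf{IEL}^{-}\vdash A$.

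In fact the induction can be bypassed: in a closed normal derivation ending in $\Box$-intro, each side-derivation of $\Box A_{i}$ is a closed normal proof-term of boxed type, hence by the Canonicity Lemma is itself of the form $\mathsf{box}[\dots].\dots\,\mathsf{in}\,(\dots)$, which would exhibit a $>_{\delta}$-redex inside our supposedly normal term; so $n=0$ is forced and the last rule is simply necessitation applied to a closed derivation of $A$. I do not anticipate a genuine obstacle here: the only points needing a little care are the bookkeeping of open assumptions (to conclude that $\Delta$ and the $\Gamma_{i}$ vanish), the remark that subderivations of normal derivations are normal, and, for the inductive version, stating the induction hypothesis for an arbitrary boxed conclusion rather than for the fixed $A$.
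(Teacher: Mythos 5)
Your proposal is correct and follows exactly the route the paper intends: normalize, invoke the Canonicity Lemma to see that the last rule is $\Box$-intro, and then observe (via the bookkeeping of open assumptions and either the substitution induction or the sharper remark that a boxed premise of a normal closed $\Box$-intro would create a $>_{\delta}$-redex) that the rightmost subderivation yields a closed derivation of $A$. The paper leaves all of this as a ``straightforward application'' of canonicity; your write-up simply supplies the details it omits.
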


\begin{cor}[Weak Disjunction Property] If $\mathsf{IEL}^{-}\vdash\Box(A\vee B)$, then $\mathsf{IEL}^{-}\vdash\Box A$ or $\mathsf{IEL}^{-}\vdash\Box B$.
\end{cor}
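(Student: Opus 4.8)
The plan is to derive the Weak Disjunction Property from the Canonicity Lemma in essentially the same way that the ordinary Disjunction Property was obtained, but with one extra step to peel off the modality. First I would apply the Church-Rosser/normalization results: given a proof $\mathsf{IEL}^{-}\vdash\Box(A\vee B)$, normalize it, so that we may assume we have a normal deduction with empty set of open hypotheses and conclusion $\Box(A\vee B)$. By the Canonicity Lemma, its last rule must be the appropriate introduction rule for the principal connective $\Box$, i.e.\ an application of $\Box$-intro.

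Next I would analyse the shape of that $\Box$-intro instance. Its conclusion is $\Box(A\vee B)$, so the ``main'' subdeduction establishes $A\vee B$ from a context consisting only of the discharged modal hypotheses $A_{1},\dots,A_{n}$ together with some additional hypotheses $\Delta$; and there are side deductions $\Gamma_i\vdash\Box A_i$. Since the whole deduction has no open assumptions, each $\Gamma_i=\varnothing$, and likewise $\Delta=\varnothing$ (the additional hypotheses of a top-level $\Box$-intro would remain open). So we actually have closed proofs $\mathsf{IEL}^{-}\vdash\Box A_i$ for each $i$, and a proof of $A\vee B$ from the open assumptions $A_1,\dots,A_n$ only. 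Now, using the admissible Reflection rule (the Corollary just above, itself a consequence of Canonicity), from $\mathsf{IEL}^{-}\vdash\Box A_i$ we get $\mathsf{IEL}^{-}\vdash A_i$; substituting these closed proofs for the open assumptions in the main subdeduction yields a closed proof $\mathsf{IEL}^{-}\vdash A\vee B$. By the Disjunction Property, $\mathsf{IEL}^{-}\vdash A$ or $\mathsf{IEL}^{-}\vdash B$, and applying $\Box$-intro (equivalently Necessitation) gives $\mathsf{IEL}^{-}\vdash\Box A$ or $\mathsf{IEL}^{-}\vdash\Box B$, as required.

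I expect the main subtlety to be the bookkeeping around the $\Delta$ of additional hypotheses that this calculus permits in the $\Box$-intro rule: one must be careful that in a \emph{closed} normal proof ending in $\Box$-intro, the set $\Delta$ really is empty, and that substituting the (now closed) proofs of the $A_i$ into the main subdeduction does not reintroduce any open assumptions or break normality in a way that matters — but since we only need provability here, not normality of the resulting proof, this is harmless. An alternative, perhaps cleaner, route avoids Reflection altogether: directly inspect the normal proof of $A\vee B$ from assumptions $A_1,\dots,A_n$ and instead of substituting, simply observe that the original side proofs already give $\mathsf{IEL}^{-}\vdash\Box A_i$, so one can re-run $\Box$-intro on the two sub-cases coming from a canonical (disjunction-introduction) analysis of the proof of $A\vee B$; either way the crux is that Canonicity forces the top rule to be $\Box$-intro and then forces the proof of $A\vee B$ to itself be canonical, hence to end in a $\vee$-introduction.
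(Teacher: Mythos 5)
Your main argument is correct and is essentially the proof the paper intends: the paper derives all three corollaries as ``straightforward applications'' of the Canonicity Lemma, and your route (normalize, use Canonicity to force a terminal $\Box$-intro, observe that closedness forces $\Gamma_i=\varnothing$ and $\Delta=\varnothing$, apply Reflection to the side premises, substitute, invoke the Disjunction Property, and re-box) is exactly that, with the bookkeeping spelled out. One caveat: the ``alternative, cleaner route'' you sketch at the end does not work as stated, because the subdeduction of $A\vee B$ has \emph{open} assumptions $A_{1},\dots,A_{n}$, and Canonicity applies only to closed normal deductions --- a normal deduction of $A\vee B$ from nonempty hypotheses may perfectly well be neutral (e.g.\ the bare hypothesis $A\vee B$ itself), so it need not end in a $\vee$-introduction; the substitution-and-renormalize step of your primary argument is genuinely needed.
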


Another interesting question is whether $\mathsf{IEL}^{-}$ satisfies the subformula property.\footnote{We are extremely grateful to an anonymous reviewer for pointing to this aspect of the structural behaviour of the system.} In the present paper, we are mainly focused on the computational aspects of the intuitionistic modality for belief, and that property would be relevant to these aims only for it implies the decidability of the calculus. We have already developed the main lines of a general proof for the property, and obtained some preliminary results for answering positively the question (modulo minor tweaks); however, proving that the principle holds indeed for $\mathsf{IEL}^{-}$ involve checking many minor details, which make the intermediate proofs quite long. We prefer then to discuss the status of this principle in another work (in preparation) focused on the structural properties of our system. 

\section{Categorical Semantics for intuitionistic belief}\label{3}
If $\lambda$-calculus gives a computational semantics of proofs in $\mathsf{NJ}$ -- and, as we showed in the previous section, in $\mathsf{IEL}^{-}$ also -- category theory furnishes the tools for an `algebraic' semantics which is \emph{proof relevant} -- i.e. contrary to traditional algebraic semantics based on Heyting algebras and to relational semantics based on Kripke models, it focuses on the very notion of proof, distinguishing between different deductions of the same formula.

In this perspective, the correspondence between proofs and programs is extended to consider arrows in categories which have enough structure to capture the behaviour of logical operators. The so-called Curry-Howard-Lambek correspondence can be then summarized by the following table
\begin{center}
\begin{tabular}{c c c}
\hline
\textbf{Logic} & \textbf{Type Theory} & \textbf{Category Theory} \\
\hline
proposition & type & object \\
proof & term & arrow \\
theorem & inhabitant & element-arrow \\
\hline
conjunction & product type & product \\
true & unit type & terminal object \\
implication & function type & exponential \\
disjunction & sum type & coproduct \\
false & empty type & initial object \\
\hline
\end{tabular}
\end{center}
Here we see that cartesian product models conjunction, and exponential models implication. Any category having products and exponentials for any of its objects is called \textbf{cartesian closed (CCCat)}; moreover, if it has also coproducts -- modelling disjunction -- it is called \textbf{bi-cartesian closed (bi-CCCat)}: $\top$ and $\bot$ correspond to empty product \mbox{-- the terminal object $\mathsf{1}$ --} and empty coproduct -- the initial object $\mathsf{0}$ -- respectively. The reader is referred to the classic \cite{lambekscott} for the details of such completeness result.

For our calculus, in order to capture the behaviour of the epistemic modality, some more structure is required: In the following subsections some basic definitions are recalled and then used to provide $\mathsf{IEL}^{-}$ with an adequate categorical semantics.

\subsection{Monoidal Functors, Pointed Functors, and Monoidal Natural Transformations}
\begin{defn}Given a CCCat $\mathcal{C}$, a \textbf{monoidal} endofunctor consists of a functor $\mathfrak{F}:\mathcal{C}\rightarrow\mathcal{C}$ together with
\begin{itemize}
\item a natural transformation $$m_{A,B}: \mathfrak{F}A\times \mathfrak{F}B\rightarrow \mathfrak{F}(A\times B);$$
\item a morphism $$m_{\mathsf{1}}:\mathsf{1}\rightarrow \mathfrak{F}\mathsf{1},$$
\end{itemize}
preserving the monoidal structure of $\mathcal{C}$.\footnote{See \cite{maclane} for the corresponding commuting diagrams and the definition of monoidal category.}

These are called \textbf{structure morphisms} of $\mathfrak{F}$.
\end{defn}

It is quite easy to see that a monoidal endofunctor on the category of logical formulas induces a modal operator satisfying $\mathsf{K}$-scheme, as proved in \cite{depaivaeike}.

\begin{defn}Given any category $\mathcal{C}$, an endofunctor $\mathfrak{F}:\mathcal{C}\rightarrow\mathcal{C}$ is \textbf{pointed} iff there exists a natural transformation $$\begin{matrix}
\pi: Id_{\mathcal{C}}\Rightarrow \mathfrak{F} \\
\pi_{A}: A\rightarrow \mathfrak{F}A
\end{matrix}$$ 
$$\xymatrix{
A \ar@{->}[r]^{\pi_{A}} \ar@{->}[d]_{f} & \mathfrak{F}A\ar@{->}[d]^{\mathfrak{F}f} \\
B \ar@{->}[r]_{\pi_{B}} & \mathfrak{F}B \\
}$$
$\pi$ is called the \textbf{point} of $\mathfrak{F}$.
\end{defn}

In the present setting, a pointed endofunctor on the category of logical formulas `represents' the co-reflection scheme.

Since we want to give a semantics of proofs -- and not simply of derivability -- in $\mathsf{IEL}^{-}$, we need a further notion from category theory.

\begin{defn}Given a monoidal category $\mathcal{C}$, and monoidal endofunctors $\mathfrak{F},\mathfrak{G}:\mathcal{C}\rightarrow\mathcal{C}$, a natural transformation $\kappa:\mathfrak{F}\Rightarrow \mathfrak{G}$ is \textbf{monoidal} when the following commute:
$$\xymatrix{
\mathfrak{F}A\times \mathfrak{F}B \ar@{->}[d]_{\kappa_{A}\times\kappa_{B}}\ar@{->}[r]^{m^{\mathfrak{F}}_{A,B}} & \mathfrak{F}(A\times B)\ar@{->}[d]^{\kappa_{A\times B}} \\
\mathfrak{G}A\times \mathfrak{G}B \ar@{->}[r]_{m^{\mathfrak{G}}_{A,B}} & \mathfrak{G}(A\times B)\\
}$$
and
$$\xymatrix{
\mathsf{1} \ar@{=}[d]\ar@{->}[r]^{m^{\mathfrak{F}}_{\mathsf{1}}} & \mathfrak{F}(\mathsf{1})\ar@{->}[d]^{\kappa_{\mathsf{1}}} \\
\mathsf{1} \ar@{->}[r]_{m^{\mathfrak{G}}_{\mathsf{1}}} & \mathfrak{G}(\mathsf{1})\\
}$$
\end{defn}
\subsection{Categorical Completeness}\label{3.2}

Finally, we introduce the models by which we want to capture $\mathsf{IEL}^{-}$.

\begin{defn} An $\mathsf{IEL}^{-}$-category is given by a bi-CCCat $\mathcal{C}$ together with a monoidal pointed endofunctor $\mathfrak{K}$ whose point $\kappa$ is monoidal.
\end{defn}

Now we can check adequacy of these models.
\begin{thm}[Soundness] Let $\mathcal{C}$ be an $\mathsf{IEL}^{-}$-category. Then there is a canonical interpretation $\llbracket -\rrbracket$ of $\mathsf{IEL}^{-}$ in $\mathcal{C}$ such that

\begin{itemize}
\item[$\circ$] a formula $A$ is mapped to a $\mathcal{C}$-object $\llbracket A\rrbracket$;
\item[$\circ$] a deduction $t$ of $A_{1},\cdots,A_{n}\vdash_{\mathsf{IEL}^{-}}B$ is mapped to an arrow \sloppy\mbox{$\llbracket t\rrbracket:\llbracket A_{1}\rrbracket\times\cdots\times\llbracket A_{n}\rrbracket\rightarrow\llbracket B\rrbracket$;}
\item[$\circ$] for any two deductions $t$ and $s$ which are equal modulo $\overset{\mathsmaller{\Box}}{=}$, we have \mbox{$\llbracket t\rrbracket=\llbracket s\rrbracket$.}
\end{itemize}
\end{thm}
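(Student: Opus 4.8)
The plan is to define the interpretation $\llbracket-\rrbracket$ by structural recursion, first on formulas and then on deductions (equivalently, on proof-terms of the modal $\lambda$-calculus), and finally to verify that it respects the equivalence $\overset{\mathsmaller{\Box}}{=}$ by checking it rewriting rule by rewriting rule. On objects, set $\llbracket p\rrbracket$ to an arbitrarily chosen object for each atom, $\llbracket\mathsf{1}\rrbracket=\mathsf{1}$, $\llbracket\mathsf{0}\rrbracket=\mathsf{0}$, $\llbracket A\times B\rrbracket=\llbracket A\rrbracket\times\llbracket B\rrbracket$, $\llbracket A+B\rrbracket=\llbracket A\rrbracket+\llbracket B\rrbracket$, $\llbracket A\rightarrow B\rrbracket=\llbracket B\rrbracket^{\llbracket A\rrbracket}$, and $\llbracket\Box A\rrbracket=\mathfrak{K}\llbracket A\rrbracket$; this is well-defined since $\mathcal{C}$ is a bi-CCCat equipped with the endofunctor $\mathfrak{K}$.

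Next I would interpret deductions. A context $A_{1},\dots,A_{n}$ is sent to the product $\llbracket A_{1}\rrbracket\times\cdots\times\llbracket A_{n}\rrbracket$ (with the empty context going to $\mathsf{1}$), and each $\mathsf{NJ}$-rule is interpreted exactly as in the classical Curry--Howard--Lambek dictionary recalled above: a hypothesis becomes a projection, $\rightarrow$-intro becomes currying, $\rightarrow$-elim becomes evaluation composed with the pairing, the $\times$-rules become pairing and projections, the $+$-rules become coproduct injections and the copairing, and the $\mathsf{0},\mathsf{1}$-rules become the unique arrows out of the initial and into the terminal object; all of this is standard and I would simply cite \cite{lambekscott}. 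The only genuinely new clause is the $\Box$-intro rule. Given subdeductions $\llbracket t_i\rrbracket:\llbracket\Gamma_i\rrbracket\rightarrow\mathfrak{K}\llbracket A_i\rrbracket$ and $\llbracket s\rrbracket:\llbracket A_1\rrbracket\times\cdots\times\llbracket A_n\rrbracket\times\llbracket\Delta\rrbracket\rightarrow\llbracket B\rrbracket$, I would set $\llbracket\mathsf{box}[\vec x].\vec t\ \mathsf{in}\ s\rrbracket$ to be the composite that first pairs the $\llbracket t_i\rrbracket$ to land in $\mathfrak{K}\llbracket A_1\rrbracket\times\cdots\times\mathfrak{K}\llbracket A_n\rrbracket$, then applies (iterated uses of) the structure morphism $m$ of $\mathfrak{K}$ to reach $\mathfrak{K}(\llbracket A_1\rrbracket\times\cdots\times\llbracket A_n\rrbracket)$, uses the point $\kappa$ on the $\llbracket\Delta\rrbracket$-component (i.e. $\mathsf{1}\xrightarrow{m_{\mathsf{1}}}\mathfrak{K}\mathsf{1}$ together with $\kappa_{\llbracket\Delta\rrbracket}$) and $m$ again to obtain an arrow into $\mathfrak{K}(\llbracket A_1\rrbracket\times\cdots\times\llbracket A_n\rrbracket\times\llbracket\Delta\rrbracket)$, and finally post-composes with $\mathfrak{K}\llbracket s\rrbracket:\mathfrak{K}(\llbracket A_1\rrbracket\times\cdots\times\llbracket\Delta\rrbracket)\rightarrow\mathfrak{K}\llbracket B\rrbracket$; the source contexts $\vec\Gamma$ and $\Delta$ are glued via the canonical product reindexing. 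One must check this is independent of the order in which the binary $m$'s are associated, which follows from the coherence axioms for a monoidal functor \cite{maclane}.

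Finally I would verify the third bullet: $t\overset{\mathsmaller{\Box}}{=}s$ implies $\llbracket t\rrbracket=\llbracket s\rrbracket$. Since $\overset{\mathsmaller{\Box}}{=}$ is generated by the ordinary $\beta\eta$-plus-permutation rewritings together with $>_\iota$ and $>_\delta$, it suffices to treat one rewrite step at a time. The purely intuitionistic equations hold because $\mathcal{C}$ is a bi-CCCat, again by \cite{lambekscott}. For $>_\iota$, the equation $\mathsf{box}[x].t\ \mathsf{in}\ x>_\iota t$ amounts to the naturality square of the point $\kappa$ (instantiated so that the free variable is discharged trivially), i.e. $\mathfrak{K}f\circ\kappa=\kappa\circ f$ collapses the composite defining the left-hand side to $\llbracket t\rrbracket$. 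For $>_\delta$, which collapses two nested $\mathsf{box}$'s, the required identity is precisely where functoriality of $\mathfrak{K}$ (to slide one $\mathfrak{K}\llbracket s\rrbracket$ past the other), monoidality of the structure maps $m$ (to regroup the $\mathfrak{K}$-products), and monoidality of the point $\kappa$ (to commute $\kappa$ past $m$ on the $\vec D$- and context-components) all get used together; writing out the relevant pasting diagram and invoking these three axioms closes the case. I expect this $\delta$-case to be the main obstacle: the bookkeeping of the product reindexings and the nested applications of $m$ makes the diagram large, and it is exactly here that the hypothesis ``$\kappa$ is monoidal'' — rather than just ``$\mathfrak{K}$ is pointed and monoidal'' — is indispensable, so the verification must be done carefully rather than waved through.
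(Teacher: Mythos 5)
Your proposal follows essentially the same route as the paper: interpret formulas and contexts in the bi-CCCat structure, send the $\Box$-intro rule to the composite $\mathfrak{K}\llbracket s\rrbracket\circ m\circ(\llbracket t_{1}\rrbracket\times\cdots\times\llbracket t_{n}\rrbracket\times\kappa_{\llbracket\Delta\rrbracket})$, and verify $\overset{\mathsmaller{\Box}}{=}$ one rewrite step at a time, with monoidality of $\kappa$ doing the essential work in the $\delta$-case. The one slip is your justification of the $\iota$-case: in $\mathsf{box}[x].\,t\;\mathsf{in}\;x$ the body is the bare variable, so the interpretation is $\mathfrak{K}(\mathrm{id}_{\llbracket A\rrbracket})\circ\llbracket t\rrbracket$, which collapses to $\llbracket t\rrbracket$ by functoriality of $\mathfrak{K}$ (preservation of identities); the point $\kappa$ does not appear in that composite at all, since there are no undischarged side hypotheses there, so its naturality square is not what is being used.
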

\begin{proof}
By structural induction on $f: \vec{A}\vdash_{\mathsf{IEL}^{-}} B$. The intuitionistic cases are interpreted according to the remarks about bi-CCCats at the beginning of this section. We overload the notation using $\langle\Box,m,\kappa\rangle$ for the monoidal pointed endofunctor of $\mathcal{C}$, its structure morphisms, and its point.

The deduction \begin{center}\begin{scriptsize}

\AxiomC{$f_{1}:\;\Gamma_{1}\vdash\Box A_{1}$}\AxiomC{$\cdots$}\AxiomC{$f_{n}:\;\Gamma_{n}\vdash\Box A_{n}$}\AxiomC{$g:\; [A_{1},\cdots , A_{n}],C_{1},\cdots,C_{m}\vdash B$}
\QuaternaryInfC{$\Box B$}\DisplayProof
\end{scriptsize}
\end{center}
is mapped to $$(\Box\llbracket g\rrbracket)\circ m_{\llbracket A_{1}\rrbracket,\cdots,\llbracket A_{n}\rrbracket,\llbracket C_{1}\rrbracket,\cdots,\llbracket C_{m}\rrbracket}\circ\llbracket f_{1}\rrbracket\times\cdots\times\llbracket f_{n}\rrbracket\times\kappa_{\llbracket C_{1}\rrbracket}\times\cdots\times\kappa_{\llbracket C_{m}\rrbracket},$$ where $m_{X_{1},\cdots,X_{n}}$ is defined inductively as $$m_{X_{1},\cdots,X_{n-1},X_{n}}:=m_{X_{1}\times\cdots\times X_{n-1},X_{n}}\circ(m_{X_{1},\cdots,X_{n-1}})\times id_{\Box X_{n}}.$$

It is straightforward to check that $\overset{\iota}{=}$ holds in the category $\mathcal{C}$ by functoriality of $\Box$.

The relation $\overset{\delta}{=}$ is also valid by naturality of $m$ and $\kappa$: The reader is invited to check that $\kappa$ must be monoidal in order to model correctly the following special case\footnote{Everything reduces to long categorical calculations.}
\begin{center}
\begin{tabular}{l}
\AxiomC{$\Gamma_{1}$}\noLine\UnaryInfC{$\vdots$}\noLine\UnaryInfC{$A_{1}$}\RightLabel{$\mathsmaller{\Box-intro}$}\UnaryInfC{$\Box A_{1}$}\AxiomC{$\,$}\noLine\UnaryInfC{$\,$}\noLine\UnaryInfC{$\cdots$}\AxiomC{$\Gamma_{n}$}\noLine\UnaryInfC{$\vdots$}\noLine\UnaryInfC{$A_{n}$}\RightLabel{$\mathsmaller{\Box-intro}$}\UnaryInfC{$\Box A_{n}$}\AxiomC{$[A_{1},\cdots,A_{n}]^{1},C_{1},\cdots,C_{m}$}\noLine\UnaryInfC{$\vdots$}\noLine\UnaryInfC{$B$}\RightLabel{$\mathsmaller{\Box-intro:1}$}\QuaternaryInfC{$\Box B$}\DisplayProof$\;\leadsto$ \\ $\;$\\ $\leadsto\qquad\qquad$\AxiomC{$\Gamma_{1}$}\noLine\UnaryInfC{$\vdots$}\noLine\UnaryInfC{$A_{1}$}\AxiomC{$\cdots$}\noLine\UnaryInfC{$\,$}\noLine\UnaryInfC{$\,$}\noLine\UnaryInfC{$\,$}\noLine\UnaryInfC{$\,$}\noLine\UnaryInfC{$\cdots$}\AxiomC{$\Gamma_{n}$}\noLine\UnaryInfC{$\vdots$}\noLine\UnaryInfC{$A_{n}$}\AxiomC{$C_{1},\cdots,C_{m}$}\noLine\QuaternaryInfC{$\vdots$}\noLine\UnaryInfC{$B$}\RightLabel{$\mathsmaller{\Box-intro}$}\UnaryInfC{$\Box B$}\DisplayProof
.
\end{tabular}

\end{center} 
\end{proof}

It remains to show that this interpretation is also complete.
\begin{thm}[Completeness] If the interpretation of two \sloppy\mbox{$\mathsf{IEL}^{-}$-deductions} is equal in all \sloppy\mbox{$\mathsf{IEL}^{-}$-categories,} then the two deductions are equal modulo $\overset{\mathsmaller{\Box}}{=}$.
\end{thm}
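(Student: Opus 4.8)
The plan is to prove completeness by the standard term-model (syntactic category) construction familiar from the Curry-Howard-Lambek correspondence, adapted to the modal setting. First I would build the classifying category $\mathcal{C}_{\mathsf{IEL}^{-}}$ whose objects are the formulas of $\mathsf{IEL}^{-}$ (or finite products of them, depending on how one sets up contexts), and whose morphisms $A \to B$ are $\overset{\mathsmaller{\Box}}{=}$-equivalence classes of $\mathsf{IEL}^{-}$-deductions (equivalently, proof-terms) of $B$ from the single hypothesis $A$. Composition is substitution of deductions, the identity is the trivial deduction of $A$ from $A$, and associativity and unitality hold precisely because $\overset{\mathsmaller{\Box}}{=}$ incorporates the standard $\lambda$-calculus rewritings from Remark \ref{pasp}. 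The bi-cartesian closed structure of this category is exactly the classical Lambek-Scott result: $\times$ is given by $\wedge$, exponentials by $\rightarrow$, the terminal object by $\top$, the initial object by $\bot$, coproducts by $\vee$, with the relevant universal properties witnessed by the introduction/elimination deductions and their $\beta\eta$-equations.

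Next I would equip $\mathcal{C}_{\mathsf{IEL}^{-}}$ with the modal structure. The functor $\mathfrak{K}$ sends $A$ to $\Box A$ and sends a deduction $f \colon A \vdash B$ to the deduction $\Box f \colon \Box A \vdash \Box B$ obtained by a single application of $\Box$-intro (using the premise $\Box A$ together with the subdeduction $f$ of $B$ from $A$); functoriality up to $\overset{\mathsmaller{\Box}}{=}$ is witnessed by $\overset{\iota}{=}$ (preservation of identities) and $\overset{\delta}{=}$ (preservation of composition). The point $\kappa_A \colon A \to \Box A$ is the co-reflection deduction $[A]^1 \Box{-}intro \,\Box A$; naturality up to $\overset{\mathsmaller{\Box}}{=}$ is again a $\overset{\delta}{=}$-calculation. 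The monoidal structure morphisms $m_{A,B} \colon \Box A \times \Box B \to \Box(A\times B)$ and $m_{\mathsf{1}} \colon \mathsf{1}\to\Box\mathsf{1}$ come from the $\Box$-intro rule applied to the pairing deduction and to the trivial deduction of $\top$; that these satisfy the monoidal coherence conditions, and that $\kappa$ is a monoidal natural transformation, are exactly the equations that the $\delta$-rule (in the special form displayed at the end of the Soundness proof) forces to hold — this is where I would lean on the remark that "$\kappa$ must be monoidal in order to model correctly" that special case, read now in the reverse direction. Hence $\mathcal{C}_{\mathsf{IEL}^{-}}$ is a genuine $\mathsf{IEL}^{-}$-category.

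The final step is the usual completeness argument. In $\mathcal{C}_{\mathsf{IEL}^{-}}$ the canonical interpretation $\llbracket - \rrbracket$ of Soundness sends each formula to itself and — by a routine induction on the structure of deductions, matching each clause of the interpretation against the corresponding clause in the construction of the bi-CCC and modal structure above — sends each deduction $t \colon \vec A \vdash B$ to its own $\overset{\mathsmaller{\Box}}{=}$-class as a morphism $\llbracket \vec A \rrbracket \to \llbracket B \rrbracket$. Therefore, if two deductions $t$ and $s$ of the same sequent have $\llbracket t \rrbracket = \llbracket s \rrbracket$ in every $\mathsf{IEL}^{-}$-category, then in particular this holds in $\mathcal{C}_{\mathsf{IEL}^{-}}$, which says $[t]_{\overset{\mathsmaller{\Box}}{=}} = [s]_{\overset{\mathsmaller{\Box}}{=}}$, i.e. $t \overset{\mathsmaller{\Box}}{=} s$, as required.

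The main obstacle is the verification that the modal structure on $\mathcal{C}_{\mathsf{IEL}^{-}}$ genuinely satisfies all the axioms of an $\mathsf{IEL}^{-}$-category — functoriality of $\mathfrak{K}$, naturality of $\kappa$, the monoidal coherence diagrams for $m$, and the monoidality of $\kappa$ — each of which amounts to exhibiting an explicit $\rhd$-rewriting (essentially a use of $>_{\delta}$ together with the commuting conversions) between two concrete proof-terms built from nested $\mathsf{box}[\cdot]$ constructors. These are the "long categorical calculations" alluded to in the Soundness proof, now to be carried out symmetrically; individually routine, but numerous, and one has to be careful that the bookkeeping of discharged hypotheses and free variables in the $\Box$-intro rule matches the indices in the structure morphisms $m_{X_1,\dots,X_n}$ exactly as in the Soundness clause. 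I would organize these as a sequence of short lemmas, one per required equation, each reduced to a single application of the displayed $\delta$-reduction.
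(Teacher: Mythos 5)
Your proposal is correct and follows essentially the same route as the paper: a term model whose objects are formulas and whose arrows are $\overset{\mathsmaller{\Box}}{=}$-classes of deductions, with the bi-cartesian closed structure from $\mathsf{NJ}$, the functor $\mathfrak{K}$ and its point from $\Box$-intro and co-reflection, and all required equations discharged as instances of $\overset{\iota}{=}$ and $\overset{\delta}{=}$. Your explicit remark that the canonical interpretation into this term model is (up to the quotient) the identity on deductions is the step the paper leaves tacit, and is a welcome clarification rather than a deviation.
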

\begin{proof}
We proceed by constructing a term model for the modal $\lambda$-calculus for $\mathsf{IEL}^{-}$-deductions.
Consider the following category $\mathcal{M}$:
\begin{itemize}
\item its objects are formulae;
\item an arrow $f:A\rightarrow B$ is an $\mathsf{IEL}^{-}$-deduction of $B$ from $A$ modulo $\overset{\mathsmaller{\Box}}{=}$;
\item identities are given by assuming a hypothesis;
\item composition is given by transitivity of deductions.
\end{itemize}

Then $\mathcal{M}$ has a bi-cartesian closed structure given by the properties of conjunction, implication, and disjunction in $\mathsf{NJ}$.

Moreover, the modal operator $\Box$ induces a functor $\mathfrak{K}$ by mapping $A$ to $\Box A$, and

\begin{center}
\AxiomC{$A$}\noLine\UnaryInfC{$\vdots$}\noLine\UnaryInfC{$B$}\DisplayProof$\mapsto$
\AxiomC{$\Box A$}
\AxiomC{$[A]$}\noLine\UnaryInfC{$\vdots$}\noLine\UnaryInfC{$B$}\RightLabel{$\mathsmaller{\Box-intro}$}\BinaryInfC{$\Box B$}\DisplayProof \end{center}

which preserves identities by $\overset{\iota}{=}$ , and preserves composition as a special case of $\overset{\delta}{=}$.

The structure morphism is given by
\begin{center}

\AxiomC{$\Box A\wedge\Box B$}\UnaryInfC{$\Box A$}\AxiomC{$\Box A\wedge\Box B$}\UnaryInfC{$\Box B$}\AxiomC{$[A]$}\AxiomC{$[B]$}\BinaryInfC{$A\wedge B$}\TrinaryInfC{$\Box(A\wedge B)$}\DisplayProof \end{center}
whose properties follow as a special case of $\overset{\delta}{=}$.

The point is given by \AxiomC{$A$}\UnaryInfC{$\Box A$}\DisplayProof and its characteristic property is given as a special case of $\overset{\delta}{=}$. Finally, such a point is monoidal by $\overset{\delta}{=}$ up to $\wedge$-detours.

Then if an equation between interpreted $\mathsf{IEL}^{-}$-deductions holds in all $\mathsf{IEL}^{-}$-categories, then it holds also in $\mathcal{M}$, so that those deductions are equal w.r.t. $\overset{\mathsmaller{\Box}}{=}$.

\end{proof}

\begin{oss}[Belief and truncation] In \cite{bracket}, truncation -- there called ``bracket types'' -- is defined in a first order calculus with types, and showed to behave like a monad. Similarly, in \cite{hottmodalities}, ($n$-)truncation is defined as a monadic idempotent modality within the framework of homotopy type theory.

We have just seen that despite the truncation does eliminate all computational significance to an inhabitant of a type -- turning then a \emph{proof of a proposition into a simple verification of that statement} -- the belief modality defined in \cite{artemov} \emph{does not} correspond to that operator on types.

Actually, after considering the potential applications of $\mathbb{IEL}^{(-)}$ \sloppy{prospected} by Artemov and Protopopescu outside the realm of mathematical statements, that should be not surprising at all: The categorical semantics of $\mathsf{IEL}^{-}$-deductions subsumes the interpretation of truncation as an idempotent monad, since such a functor is just a special case of monoidal pointed endofunctor with monoidal point.

It might be interesting thus to consider the relationship between truncation and the belief modality from a purely syntactic perspective, by comparing the structural properties of a potential simple type theory with bracket types and our modal $\lambda$-calculus for intuitionistic belief.\footnote{As stated before, truncation has been considered only in a first-order context -- i.e. working within dependent types. It should be possible, however, to define truncation for simple types by imposing further reductions to terms of the system mimicking the rules involving (intensional) equality in bracketed types.} 
\end{oss}
\section*{Conclusion}\addcontentsline{toc}{section}{Conclusion}
Our original intent has been to make precise the computational significance of the motto ``belief-as-verification'' which leads in \cite{artemov} to the introduction of epistemic modalities in the framework of BHK interpretation. In particular, despite some claims contained in that paper, we were not sure how to relate the belief operator with type truncation.

In the present paper, we have addressed these questions and have developed a `proof-theoretically tractable' system  for intuitionistic belief that can be easily turned into a modal $\lambda$-calculus, showing that the epistemic operator behaves differently from truncation.

Moreover, by extending some results concerning categorical semantics for the basic intuitionistic modal logic $\mathbb{IK}$ in \cite{depaivaeike} and \cite{kakutani}, we developed a proof-theoretic semantics for intuitionistic belief based on monoidal pointed endofunctors with monoidal points on bi-CCCats. Even from this `categorical' perspective, the modal operator differs from type-theoretic truncation, so that the reading of belief as the result of verification seems to be just a heuristic interpretation of that specific modality.

Having established so, some general questions naturally arise:
\begin{itemize}
\item How could be the original motivation of co-reflection scheme $A\rightarrow\Box A$ -- i.e. the interpretation of $\Box$ as a verification operator on propositions -- correctly captured, from a computational point of view, by intuitionistic logic of belief?
\item Does the possible extension $\mathsf{IEL}$ of $\mathsf{IEL}^{-}$ obtained by adding the elimination rule \AxiomC{$\Gamma\vdash\Box A$}\UnaryInfC{$\Gamma\vdash\neg\neg A$}\DisplayProof recover the intuitionistic reading of epistemic states as results of verification in a formal way -- i.e. as type-theoretic truncation?
\end{itemize}

In our opinion, these problems are strongly related: In fact, it seems plausible that the additional elimination rule provides $\mathsf{IEL}$ with an adjunction between $\Box$ and $\neg\neg$ which has still to be checked and deserves a fine grained analysis and comparison with type truncation.

Moreover, it might be interesting to consider similar modalities in different settings, including first order logic and linear logics.

\subsection*{Acknowledgements}
These results were obtained during my first year of PhD studies at DiMa of University of Genoa.

After discussing a preliminary version of categorical semantics for intuitionistic belief during the Logic and Philosophy of Science Seminar of University of Florence in May 2019, 
I had the great opportunity to present more refined results during the poster session of The Proof Society Summer School in September 2019: My gratitude goes to all logicians of the Computational Foundry of Swansea University who hosted the event, along with the invited lecturers and the participants. In particular, I wish to thank Reuben Rowe for an enlightening discussion on the way to Rhossili about the behaviour of the $\Box$ in my formal system.

The present version is a revised one: The original manuscript was submitted to the journal \emph{Studia Logica} in January 2020.\\
A sincere acknowledgement goes to an anonymous reviewer for suggesting further investigations on the proof theory of the calculus I originally developed: Her/His comments have been very important for structuring the final version of this work.  


 


\end{document}